\newcommand{\n}{ \ensuremath{ ^{(n)} } }
\newcommand{\Xhat}{ \ensuremath{ \widehat{X} } }
\newcommand{\Xeq}{ \ensuremath{ \Xhat\n } }
\newcommand{\Ximp}{ \ensuremath{ \widehat{X}^{n,\mathrm{imp}} } }
\newcommand{\Xbar}{ \ensuremath{ \overline{X}^{n} } }
\newcommand{\Yhat}{ \ensuremath{ \widehat{Y} } }
\newcommand{\Yeq}{ \ensuremath{ \Yhat\n } }
\newcommand{\Zbar}{ \ensuremath{ \overline{Z}^{n} } }
\newcommand{\adap}{\ensuremath{ \mathrm{ad} }}
\newcommand{\equi}{\ensuremath{ \mathrm{eq} }}
\newcommand{\N}{\mathbb{N}}
\newcommand{\R}{\mathbb{R}}
\newcommand{\F}{\mathcal{F}}
\newcommand{\A}{\mathcal{A}}
\newcommand{\ind}{1}
\newcommand{\abs}[1]{\ensuremath{ \left\vert #1 \right\vert } }
\newcommand{\set}[1]{\ensuremath{ \left\{ #1 \right\} } }
\DeclareMathOperator{\argmax}{argmax}
\DeclareMathOperator{\sgn}{sgn}
\newcommand{\cond}{\,\middle\vert\,}
\newcommand{\E}[1]{\ensuremath{ \mathrm{E}\left( #1 \right) }}
\newcommand{\prob}{\ensuremath{ \mathrm{P} }}
\newcommand{\ndist}[2]{\mathcal{N}(#1, #2)}
\newcommand{\eqdist}{ \ensuremath{ \stackrel{\text{d}}{=} } }
\newcommand{\todist}{ \ensuremath{ \stackrel{\text{d}}{\longrightarrow} } }
\newcommand{\eps}{\varepsilon}
\theoremstyle{plain}
	\newtheorem{lem}{Lemma}
	\newtheorem{thm}{Theorem}
	\newtheorem{cor}{Corollary}
	\newtheorem{prop}{Proposition}
\theoremstyle{definition}
\theoremstyle{remark}
	\newtheorem{rem}{Remark}
\begin{document}

\title[Optimal Strong Approximation of the $1$-D Squared {B}essel Process]
	{Optimal Strong Approximation of the One-dimensional Squared {B}essel Process}

\author[Hefter]{Mario Hefter}
\address{Fachbereich Mathematik\\
Technische Universit\"at Kaisers\-lautern\\
Postfach 3049\\
67653 Kaiserslautern\\
Germany}
\email{hefter@mathematik.uni-kl.de}

\author[Herzwurm]{Andr\'{e} Herzwurm}
\address{Fachbereich Mathematik\\
Technische Universit\"at Kaisers\-lautern\\
Postfach 3049\\
67653 Kaiserslautern\\
Germany}
\email{herzwurm@mathematik.uni-kl.de}

\begin{abstract}
	We consider the one-dimensional squared Bessel process given by the stochastic differential
	equation (SDE)
	\begin{align*}
		dX_t = 1\,dt + 2\sqrt{X_t}\,dW_t, \quad X_0=x_0, \quad t\in[0,1],
	\end{align*}
	and study strong (pathwise) approximation of the solution $X$ at the final time point $t=1$.
	This SDE is a particular instance of a Cox-Ingersoll-Ross (CIR) process where the boundary point
	zero is accessible.
	We consider numerical methods that have access to values of the driving Brownian motion $W$
	at a finite number of time points. We show that the polynomial convergence rate
	of the $n$-th minimal errors for the class of adaptive algorithms as well as
	for the class of algorithms that rely on equidistant grids are equal to
	infinity and $1/2$, respectively.
	This shows that adaption results in a tremendously improved convergence rate.
	As a by-product, we obtain that the parameters appearing in the CIR process affect
	the convergence rate of strong approximation.
\end{abstract}

\keywords{
	Cox-Ingersoll-Ross process;
	strong approximation;
	$n$-th minimal error;
	adaptive algorithm;
	reflected Brownian motion}

\maketitle

\section{Introduction}
In recent years, strong approximation of stochastic differential equations (SDEs) has intensively
been studied for SDEs of the form
\begin{align}\label{eq:CIR}
	dX_t = (a-bX_t)\,dt + \sigma\sqrt{X_t}\,dW_t, \quad X_0=x_0, \quad t\geq0,
\end{align}
with a one-dimensional Brownian motion $W$, and $a,x_0\geq0$, $b\in\R$, and $\sigma>0$.
These SDEs are known to have a unique non-negative strong solution.
Such SDEs were proposed in \cite{cir} as a model for short-term interest rates.
The solution is called Cox-Ingersoll-Ross (CIR) process. Moreover, CIR processes
are used as the volatility process in the Heston model~\cite{heston}.

Strong approximation is of particular interest due to the multi-level Monte Carlo
technique, see \cite{giles1,giles2,heinrich1}. For an optimality result of
this technique applied to quadrature problems of the form $\E{f(X)}$
with $f\colon C([0,1])\to\R$, we refer to \cite{creutzig}. In mathematical finance,
the functional $f$ often represents a discounted payoff of some derivative
and $\E{f(X)}$ is the corresponding price.

In \cite{alfonsi2005}, various numerical schemes have been proposed and numerically
tested for the SDE~\eqref{eq:CIR} with different choices of the corresponding parameters.
These numerical results indicate a convergence at a polynomial rate,
which depends on the parameters $a,\sigma$. More precisely,
the empirical convergence rate is monotonically decreasing in the quotient $\sigma^2/(2a)$
for all numerical schemes that have been tested.
Polynomial convergence rates for strong approximation of~\eqref{eq:CIR}
have been proven by \cite{berkaoui,dereich,alfonsi2013,neuenkirch-szpruch,hjn-cir},
where either a global or the final time error w.r.t.~the $L_p$-norm is studied.
All these results only hold for some parameter range within $\sigma^2/(2a)<2$
and share the same monotonicity, see Figure~\ref{fig:CIR-rates}.
For an overview of numerical schemes and results on strong convergence without a rate
we refer to \cite{dereich} and the references therein.

\begin{figure}[htp]
	\centering
	\includegraphics[width=0.9\linewidth]{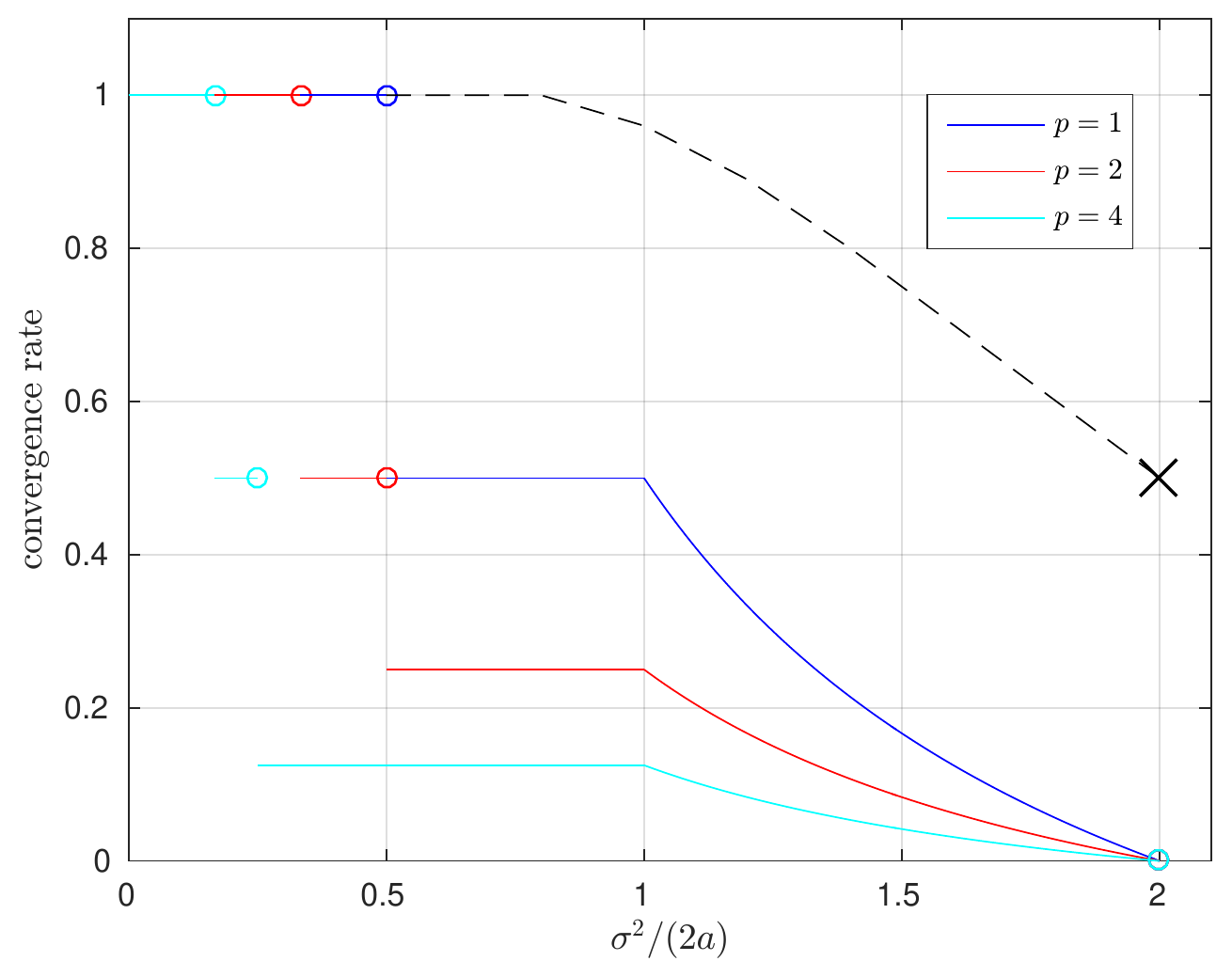}
	\caption{Convergence rates of the known (best) upper bounds from
		\cite{dereich,alfonsi2013,neuenkirch-szpruch,hjn-cir}
		are shown for different values of $p\in[1,\infty[$
		with error given by~\eqref{eq:error-scheme}.
		All these upper bounds are obtained for the drift-implicit Euler scheme.
		The dashed line shows the corresponding empirical convergence
		rates from \cite{alfonsi2005} for $p=1$.
	}
	\label{fig:CIR-rates}
\end{figure}

In this paper we consider the particular case of the SDE~\eqref{eq:CIR} with
\begin{align*}
	\sigma^2/(2a)=2.
\end{align*}
By rescaling with $(2/\sigma)^2$, we thus may restrict ourselves to SDEs of the form
\begin{align}\label{eq:bessel-1-b}
	dX_t &= (1-bX_t)\,dt + 2\sqrt{X_t}\,dW_t, \quad X_0=x_0, \quad t\geq0,
\end{align}
with $x_0\geq0$ and $b\in\R$. Furthermore, we focus on the particular instance
of~\eqref{eq:bessel-1-b} with $b=0$, i.e.,
\begin{align}\label{eq:bessel-1}
	dX_t = 1\,dt + 2\sqrt{X_t}\,dW_t, \quad X_0=x_0, \quad t\geq0.
\end{align}
Its solution is called the square of a $1$-dimensional Bessel process.
For a detailed study of (squared) Bessel processes we refer to \cite[Chap.~XI]{yor}.

\bigskip
In the context of (strong) approximation of SDEs, the majority of numerical methods
in the literature are non-adaptive \cite{kloeden-platen}.
A non-adaptive algorithm uses a fixed discretisation of the driving Brownian motion
whereas adaptive algorithms may sequentially choose the evaluation points.
The most frequently studied methods in the class of non-adaptive algorithms are
Euler or Milstein-type methods that are based on values of the driving Brownian motion
on an equidistant grid. For various strong approximation problems of SDEs
satisfying standard assumptions, adaption does not help up to a multiplicative constant,
see, e.g., \cite{gronbach-ritter}.
In particular, \cite{gronbach2} showed for strong approximation of scalar SDEs at
the final time point that no adaptive method can be better
(up to a multiplicative constant) than the classical Milstein scheme.
Let us stress that these standard assumptions are not fulfilled
by~\eqref{eq:bessel-1} since the diffusion coefficient is not even locally
Lipschitz continuous.

In contrast to that, the main result of this paper is that adaptive methods
are far superior to methods that are based on an equidistant grid for strong approximation
of the solution $X_1$ of~\eqref{eq:bessel-1}.
For this, we determine the polynomial convergence rate of the corresponding
$n$-th minimal errors, which will be introduced below.

\bigskip
Let $X_1$ be the solution of \eqref{eq:bessel-1} at time $t=1$, and let $p\in[1,\infty[$.
The error of an approximation $\Xhat_1$ of $X_1$ is defined by
\begin{align}\label{eq:error-scheme}
	e_p(\Xhat_1) = \left( \E{ \big\vert X_1-\Xhat_1\big\vert^p } \right)^{1/p}.
\end{align}
At first, we consider the class of methods that only use values of the driving
Brownian motion $W$ on an equidistant grid with $n$ points given by
\begin{align*}
	\mathfrak{C}^\equi(n) = \left\{
			\Xhat_1 = \mathrm{\Phi}(W_{\frac 1n},W_{\frac 2n},\ldots,W_1):\
				\mathrm{\Phi}\colon \R^n\to\R\text{ Borel-measurable}
		\right\}.
\end{align*}
The corresponding $n$-th minimal error for the approximation of $X_1$ is given by 
\begin{align}\label{eq:minimal-error-equi}
	e_p^\equi(n) = \inf \left\{ e_p(\Xhat_1):\
		\Xhat_1 \in \mathfrak{C}^\equi(n) \right\}.
\end{align}
Roughly speaking, $e_p^\equi(n)$ is the error of the best algorithm for the
approximation of~$X_1$ w.r.t.~the $L_p$-norm that only uses
$W_{\frac 1n},W_{\frac 2n},\ldots,W_1$.
Clearly, Euler and Milstein-type schemes fit into this class of algorithms.
In the case $p=2$, the optimal approximation is given by the conditional
expectation of $X_1$ given the $\sigma$-algebra generated by
$W_{\frac 1n},W_{\frac 2n},\ldots,W_1$.

The class of adaptive methods that use values of the driving Brownian motion $W$
at $n$ sequentially chosen points is given by
\begin{align*}
	\mathfrak{C}^\adap(n) = \Big\{
		\Xhat_1 &= \mathrm{\Phi}(W_{t_1},\ldots,W_{t_n}):\
			\mathrm{\Phi}\colon\R^n\to\R \text{ Borel-measurable}, \\
		& t_1\in[0,1], \\
		& t_2 = \varphi_2(W_{t_1}),\ \varphi_2\colon\R\to[0,1] \text{ Borel-measurable}, \\
		& \vdots \\
		& t_n = \varphi_n(W_{t_1},\ldots,W_{t_{n-1}}),\
		\varphi_{n}\colon\R^{n-1}\to[0,1] \text{ Borel-measurable}
	\Big\}.
\end{align*}
Here, in contrast to the class $\mathfrak{C}^\equi$, the $k$-th evaluation site $t_k$
may depend on the previous $k-1$ observations of $W$. Moreover, considering the
particular choice of constant mappings $\varphi_k=k/n$ yields
$\mathfrak{C}^\equi(n)\subseteq\mathfrak{C}^\adap(n)$ for all $n\in\N$.
The $n$-th minimal error for the approximation of $X_1$
for the class of adaptive methods is given by
\begin{align}\label{eq:minimal-error-adap}
	e_p^\adap(n) = \inf \left\{ e_p(\Xhat_1):\
		\Xhat_1 \in \mathfrak{C}^\adap(n) \right\}.
\end{align}
We clearly have $e_p^{\adap}(n) \leq e_p^\equi(n)$ for all $n\in\N$.

\bigskip
In the following we present our main results. We write $a_n\preccurlyeq b_n$ for
sequences of non-negative reals $a_n$ and $b_n$ if there exists a constant $c>0$
such that $a_n\leq c\cdot b_n$ for all $n\in\N$. Moreover, we write $a_n\asymp b_n$
if $a_n\preccurlyeq b_n$ and $b_n\preccurlyeq a_n$.

We show that the polynomial convergence rate of the $n$-th minimal error $e_p^\equi$
is equal to $1/2$ for all $p\in{[1,\infty[}$.
More precisely, Corollary~\ref{cor:convergence-equi} yields
\begin{align}\label{eq:asymp}
	e_p^\equi(n) \asymp n^{-1/2}
\end{align}
for all $p\in{[1,\infty[}$. Of course, the constants hidden in the
``$\asymp$''-notation may depend on $p$. Furthermore, the corresponding
upper bound is attained by the drift-implicit Euler scheme $\Ximp_1$,
see Theorem~\ref{thm:upper-bound} and \eqref{eq:drift-implicit-Euler-scheme-2}.
In the more general case of the SDE~\eqref{eq:bessel-1-b},
the drift-implicit Euler scheme is given by
\begin{align}\label{eq:drift-implicit-Euler-scheme}
	\Ximp_\frac{k+1}{n} = \left( \frac{
				\sqrt{ \Ximp_\frac{k}{n} } + \left(W_{\frac{k+1}{n}}-W_{\frac kn}\right)
				+ \sqrt{ \left(\sqrt{ \Ximp_\frac{k}{n} } + \left(W_{\frac{k+1}{n}}-W_{\frac kn}\right) \right)^2 }
			}{ 2+\frac bn }
			\right)^2
\end{align}
for $k=0,\ldots,n-1$ and $\Ximp_0=x_0$. Let us mention that the drift-implicit Euler
scheme is actually proposed for the SDE \eqref{eq:CIR} with parameters satisfying
$\sigma^2/(2a)<2$, see \cite{alfonsi2005}. Nevertheless, it is still well defined
in the limiting case $\sigma^2/(2a)=2$.
Note that the upper bound from \eqref{eq:asymp} is the first strong convergence result
with a positive rate in the case $\sigma^2/(2a)=2$, cf.~Figure~\ref{fig:CIR-rates}.

For adaptive algorithms the situation is rather different.
Corollary~\ref{cor:convergence-adap} shows that
\begin{align}\label{eq:precadap}
	e_p^{\adap}(n) \preccurlyeq n^{-q}
\end{align}
for all $p\in{[1,\infty[}$ and for all $q\in{[1,\infty[}$. Hence the polynomial
convergence rate of the $n$-th minimal error $e_p^\adap$ is equal to infinity.
More precisely, for every $q\in{[1,\infty[}$ we construct an adaptive algorithm
that converges (at least) at a polynomial rate~$q$, see Theorem~\ref{thm:adaptive}.
In fact, numerical experiments suggest an exponential decay, see
Figure~\ref{fig:numerical-results}. Moreover, such algorithms can be easily
implemented on a computer with number of operations of order $n^2$.
Combining \eqref{eq:asymp} and \eqref{eq:precadap} establishes our claim
that adaptive algorithms are far superior to non-adaptive algorithms that are
based on equidistant grids for strong approximation of~\eqref{eq:bessel-1}.
Let us stress that this is the first result on SDEs where adaption
results in an improved convergence rate compared to methods that are
based on equidistant grids.

A key step for the proofs of \eqref{eq:asymp} and \eqref{eq:precadap} consists of
identifying the pathwise solution of \eqref{eq:bessel-1}, see Proposition~\ref{prop:sol-b},
and link this problem to global optimization under the Wiener measure.
Let us mention that the analysis of the adaptive algorithm in Theorem~\ref{thm:adaptive}
heavily relies on results of~\cite{chh2015}.

Although we have shown that adaptive algorithms are far superior to
methods that are based on an equidistant grid for a particular choice
of the parameters of SDE~\eqref{eq:CIR}, it is open whether this
superiority also holds for more general parameter constellations.

\bigskip
We now turn to the more general case of the SDE~\eqref{eq:bessel-1-b} with $b\in\R$.
Moreover, we consider a stronger error criterion which is pathwise given by the
supremum norm. In this case we obtain
\begin{align}\label{eq:upper-bound-b}
	\left( \E{ \sup_{0\leq t\leq 1} \abs{X_t-\Xbar_t}^p } \right)^{1/p}
		\preccurlyeq n^{-1/2}\cdot \sqrt{\ln(1+n)}
\end{align}
for all $p\in{[1,\infty[}$, where $\Xbar$ denotes a projected equidistant
Euler scheme, see Remark~\ref{rem:sup-norm}. This scheme coincides
with the drift-implicit Euler scheme for $b=0$.
Let us stress that this error bound is the first strong
convergence result with a positive rate in the case $\sigma^2/(2a)=2$ and
arbitrary $b\in\R$, cf.~Figure~\ref{fig:CIR-rates}.
At present, we have only shown the upper bound~\eqref{eq:upper-bound-b}.
Nevertheless, we expect this upper bound to be sharp even for
adaptive algorithms.

\bigskip
Let us briefly comment on some consequences of the results presented above for
strong approximation of CIR processes.

It is well-known that the parameters $a$, $b$, and $\sigma$ in SDE~\eqref{eq:CIR}
have an influence on the behavior of its solution. For instance, the solution
remains strictly positive (the boundary point $0$ is inaccessible) if and only if
the so-called Feller condition $\sigma^2/(2a)\leq 1$ is satisfied.
As illustrated in Figure~\ref{fig:CIR-rates}, the drift-implicit Euler scheme converges at
least with rate $1$ if $\sigma^2/(2a)<\min(2/(3p),1/2)$, see \cite{alfonsi2013,neuenkirch-szpruch},
and so does the corresponding $n$-th minimal error for methods using an equidistant grid.
Hence the quotient $\sigma^2/(2a)$ affects the convergence rate
of the $n$-th minimal error for equidistant methods since it drops down
to $1/2$ for $\sigma^2/(2a)=2$ and $b=0$ according to~\eqref{eq:asymp}.

In contrast to the known upper bounds, cf.~Figure~\ref{fig:CIR-rates},
the convergence rate of the drift-implicit Euler scheme for~\eqref{eq:bessel-1}
does not depend on the $L_p$-norm appearing in the error criterion,
see \eqref{eq:asymp}.

Let us comment on lower bounds for strong approximation of SDEs
at the final time point based on the values of the driving Brownian motion.
In \cite{clark-cameron}, a two-dimensional SDE is presented where
the corresponding convergence rate is shown to be $1/2$. In contrast
to rate $1$ for smooth scalar SDEs \cite{gronbach2}, the difficulty
in \cite{clark-cameron} arises from the presence of L\'{e}vy areas.
More recently, the existence of SDEs with smooth coefficients has been shown
where the corresponding $n$-th minimal error converges arbitrarily slow to zero,
see \cite{Hairer2015,yaroslavtseva}.
It is crucial that these SDEs are multi-dimensional.
Apart from \eqref{eq:asymp}, we are not aware of any other lower bound
with convergence rate less than $1$ for a scalar SDE.

\bigskip
This paper is organized as follows. In Section~\ref{sec:bessel} we derive an explicit
representation of the solution of \eqref{eq:bessel-1} and the more general case
of~\eqref{eq:bessel-1-b}.
Using this representation we show sharp upper and lower bounds of $e_p^\equi$
in Section~\ref{sec:equi}. In Section~\ref{sec:adaptive} we consider a particular
adaptive method that achieves an arbitrarily high polynomial convergence rate.
Finally, we illustrate our results by numerical experiments.

\section{Squared {B}essel Process of Dimension One}\label{sec:bessel}
In this section, we will derive an explicit expression for the strong solution
of~\eqref{eq:bessel-1} by using basic results about reflected SDEs.
Subsequently, we will extend this technique to the more general case of
SDE~\eqref{eq:bessel-1-b}.

\bigskip
In the following let $(\Omega,\F,\prob)$ be a complete probability space
and let $(\F_t)_{t\geq0}$ be a filtration on this space
satisfying the usual conditions.

Given $x_0\geq0$ and a Brownian motion $B$ w.r.t.\ $(\F_t)_{t\geq0}$, we define
\begin{align}\label{eq:def-W}
	W_t = \int_0^t \sgn(B_s+\sqrt{x_0})\,dB_s
\end{align}
for all $t\geq0$ with
$\sgn=\ind_{\{x>0\}}-\ind_{\{x\leq0\}}$.
Then, $W$ is a Brownian motion w.r.t.\ $(\F_t)_{t\geq0}$. Indeed,
the quadratic variation of $W$ satisfies
\begin{align*}
	[W]_t = \int_0^t \sgn(B_s+\sqrt{x_0})^2\,ds = \int_0^t 1\,ds = t,
\end{align*}
and thus L\'{e}vy's characterization can be applied. Now, consider the SDE~\eqref{eq:bessel-1}
where the driving Brownian motion $W$ has the particular form given in \eqref{eq:def-W}.
Due to this construction of $W$, we see that the solution
of \eqref{eq:bessel-1} is given by
\begin{align}\label{eq:bessel-1-sol-B}
	X_t=(B_t+\sqrt{x_0})^2,
\end{align}
since $\sqrt{X_t}=\abs{B_t+\sqrt{x_0}}$ and hence
\begin{align*}
	2\int_0^t \sqrt{X_s}\,dW_s
		&= 2\int_0^t \abs{B_t+\sqrt{x_0}}\cdot\sgn(B_s+\sqrt{x_0})\,dB_s \\
	&= 2\int_0^t (B_s+\sqrt{x_0})\,dB_s \\
	&= B_t^2-t + 2\sqrt{x_0}\, B_t.
\end{align*}
Moreover, Tanaka's formula \cite[Prop.\ III.6.8]{karatzas} given by
\begin{align*}
	\abs{B_t-a} = \abs{a} + \int_0^t \sgn(B_s-a)\,dB_s + 2L_t^B(a), \quad a\in\R,
\end{align*}
where $L^B(a)$ denotes the local time of $B$ in $a$, yields for $a=-\sqrt{x_0}$ that
\begin{align*}
	\abs{B_t+\sqrt{x_0}} &= \sqrt{x_0} + W_t + 2L_t^B(-\sqrt{x_0}) \\
	&= \sqrt{x_0} + W_t
		+ \max\left(0, \sup_{0\leq s\leq t}-\left(\sqrt{x_0}+W_s\right) \right),
\end{align*}
where the second equality follows from Skorokhod's lemma, see \cite[Lem.\ III.6.14]{karatzas}.
Finally, using $\sup(-A)=-\inf(A)$ for $A\subseteq\R$ leads to the solution of \eqref{eq:bessel-1}
given by
\begin{align}\label{eq:bessel-1-sol}
	X_t = \left(
		 \left(W_t+\sqrt{x_0}\right)+\left(\inf_{0\leq s\leq t} W_s+\sqrt{x_0}\right)^-
	\right)^2,
\end{align}
where $x^-=-\min(0,x)$ denotes the negative part of $x$.
We stress that the explicit solution~\eqref{eq:bessel-1-sol} of
the SDE~\eqref{eq:bessel-1} holds for any Brownian motion $W$
and does not depend on the particular construction given
in \eqref{eq:def-W}, see \cite[Cor.\ V.3.23]{karatzas}, since
pathwise uniqueness and strong existence holds for the SDE~\eqref{eq:bessel-1}.
Hence the unique strong solution of the SDE~\eqref{eq:bessel-1} is given by \eqref{eq:bessel-1-sol}.

\begin{rem}
	It is well-known that the solution of the SDE~\eqref{eq:bessel-1} can be expressed in terms of
	$B$ in~\eqref{eq:bessel-1-sol-B}, see, e.g., \cite[Ex.\ IX.3.16]{yor}. However,
	we are not aware of a result regarding the explicit form of the strong solution
	given by \eqref{eq:bessel-1-sol}.
	
	In the context of SDEs, the somehow explicit solution of $X$ by means of $B$ in \eqref{eq:bessel-1-sol-B}
	is rather useless for strong approximation. The concept of strong solutions entails a functional
	dependence of the solution process and the input Brownian motion appearing in the SDE,
	which is $W$ in our case. We thus seek to ``construct'' the solution $X$ out of $W$.
\end{rem}

\begin{rem}\label{rem:dist}
	Equation~\eqref{eq:bessel-1-sol-B} clearly yields
	\begin{align*}
		\left( X_t \right)_{t\geq0} \eqdist \left( (W_t+\sqrt{x_0})^2 \right)_{t\geq0},
	\end{align*}
	cf.~\cite[Thm.~III.6.17]{karatzas}.
	However, this equation
	is only valid in the distributional sense and does not hold pathwise.
	Note that the Brownian path attains its
	running minimum whenever the solution hits $0$. More precisely, we have
	\begin{align}\label{eq:hit-0}
		X_t=0
			\quad\Leftrightarrow\quad
			W_t \leq -\sqrt{x_0} \ \wedge\  W_t = \inf_{0\leq s\leq t} W_s,
	\end{align}
	cf.~Figure~\ref{fig:path}.
\end{rem}

\begin{figure}[htbp]
	\centering
	\includegraphics[width=0.9\linewidth]{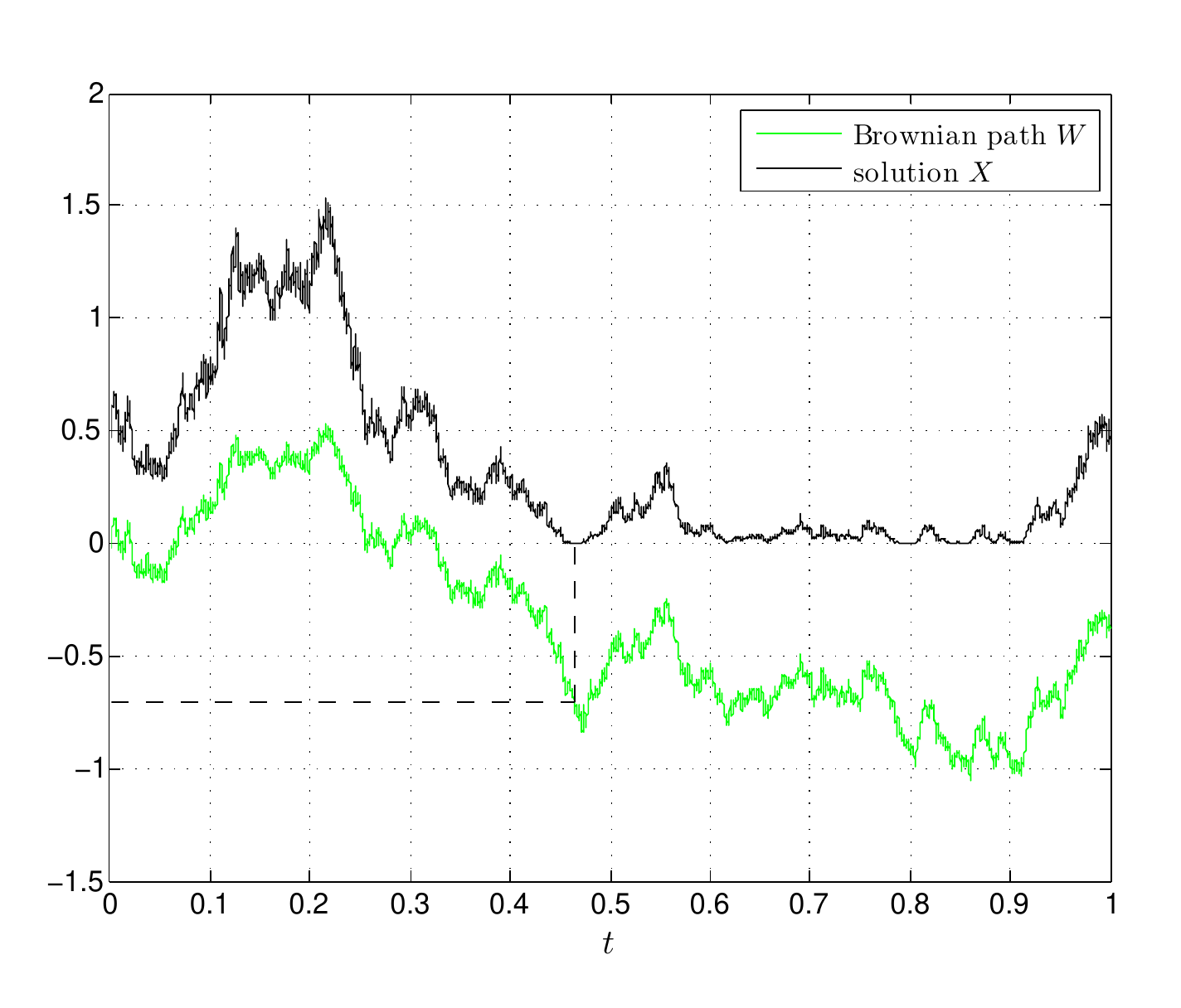}
	\caption{Brownian path and corresponding solution~\eqref{eq:bessel-1-sol}
		with initial condition $x_0=0.5$.
		The solution hits the boundary $0$ according to \eqref{eq:hit-0}.}
	\label{fig:path}
\end{figure}

\begin{rem}
	The expression
	\begin{align*}
		\left(W_t+\sqrt{x_0}\right)+\left(\inf_{0\leq s\leq t} W_s+\sqrt{x_0}\right)^-
	\end{align*}
	appearing in \eqref{eq:bessel-1-sol} is known as a reflected Brownian motion
	(with reflecting barrier at~$0$) starting in $\sqrt{x_0}\geq0$, see \cite{tanaka}.
\end{rem}

\bigskip
We now turn to the more general case of SDE~\eqref{eq:bessel-1-b}
with arbitrary $b\in\R$.

\begin{prop}\label{prop:sol-b}
	The unique strong solution of the SDE~\eqref{eq:bessel-1-b} is given by
	\begin{align*}
		X_t = \left(
				u_t + e^{-\frac{b}{2}t}
				\left( \inf_{0\leq s\leq t}\, e^{\frac{b}{2}s}\,u_s \right)^-
			\right)^2,
	\end{align*}
	where $u$ denotes the unique strong solution of the SDE
	\begin{align}\label{eq:linear-SDE-b}
		du_t = - \frac{b}{2}\,u_t \,dt + dW_t,
			\quad u_0=\sqrt{x_0}, \quad t\geq0.
	\end{align}
	In particular, the unique strong solution of the SDE~\eqref{eq:bessel-1}
	is given by \eqref{eq:bessel-1-sol}.
\end{prop}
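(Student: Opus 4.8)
The plan is to mimic the derivation of \eqref{eq:bessel-1-sol} carried out above for $b=0$, now tracking the linear drift by an exponential integrating factor; the case $b=0$ then comes out as a special case, since there $u_t=W_t+\sqrt{x_0}$. Since the SDE~\eqref{eq:bessel-1-b} is a CIR-type equation, it has pathwise uniqueness and a unique strong solution (see the references in the introduction), so by \cite[Cor.~V.3.23]{karatzas} it suffices to construct, on some filtered probability space carrying a Brownian motion $W$, a solution $X$ of \eqref{eq:bessel-1-b} for which $X_t$ agrees with the claimed functional of $u$, hence of $W$; the resulting identity between measurable functionals of the Wiener path then automatically holds for every Brownian motion.

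To build such a weak solution I would proceed as for \eqref{eq:def-W}: fix a Brownian motion $B$ w.r.t.\ $(\F_t)_{t\geq0}$, let $v$ be the unique strong solution of the linear SDE $dv_t = -\frac{b}{2} v_t\,dt + dB_t$, $v_0=\sqrt{x_0}$, and set $W_t = \int_0^t \sgn(v_s)\,dB_s$. By L\'{e}vy's characterization $W$ is an $(\F_t)$-Brownian motion (here $\sgn(\cdot)^2\equiv1$). Applying It\^{o}'s formula to $v_t^2$, using $d[v]_t=dt$ and $2v_t\,dB_t = 2\sqrt{v_t^2}\,\sgn(v_t)\,dB_t = 2\sqrt{v_t^2}\,dW_t$, shows that $X_t:=v_t^2$ solves \eqref{eq:bessel-1-b} driven by $W$ with $X_0=x_0$; in particular $\sqrt{X_t}=|v_t|$.

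It then remains to identify $|v_t|$. Tanaka's formula applied to the continuous semimartingale $v$ at level $0$ gives $|v_t| = \sqrt{x_0} + \int_0^t \sgn(v_s)\,dv_s + K_t$, where $K$ --- a constant multiple of the local time of $v$ at $0$ --- is continuous, non-decreasing, $K_0=0$, and increases only on $\{t:v_t=0\}$. Inserting the dynamics of $v$ and the definition of $W$ yields, with $R_t:=|v_t|$, the reflected equation $R_t = \sqrt{x_0} - \frac{b}{2}\int_0^t R_s\,ds + W_t + K_t$. To remove the drift I would pass to $\rho_t:=e^{\frac{b}{2}t}R_t$, which satisfies $\rho_t = \sqrt{x_0}+\int_0^t e^{\frac{b}{2}s}\,dW_s+\widetilde{K}_t$ with $\widetilde{K}_t:=\int_0^t e^{\frac{b}{2}s}\,dK_s$, while the same integrating factor applied to $u$ gives $e^{\frac{b}{2}t}u_t = \sqrt{x_0}+\int_0^t e^{\frac{b}{2}s}\,dW_s$. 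Hence $\rho_t = e^{\frac{b}{2}t}u_t+\widetilde{K}_t$, where $\rho_t\geq0$ and $\widetilde{K}$ is continuous, non-decreasing, $\widetilde{K}_0=0$, and increases only on $\{t:\rho_t=0\}$ (because $e^{\frac{b}{2}s}>0$ and $\{v_s=0\}=\{\rho_s=0\}$). Thus $(\rho,\widetilde{K})$ solves the Skorokhod reflection problem for the continuous path $t\mapsto e^{\frac{b}{2}t}u_t$, which starts at $\sqrt{x_0}\geq0$, and Skorokhod's lemma \cite[Lem.~III.6.14]{karatzas} gives $\widetilde{K}_t = \big(\inf_{0\leq s\leq t} e^{\frac{b}{2}s}u_s\big)^-$. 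Multiplying $\rho_t = e^{\frac{b}{2}t}u_t+\widetilde{K}_t$ by $e^{-\frac{b}{2}t}$ yields $\sqrt{X_t}=R_t = u_t + e^{-\frac{b}{2}t}\big(\inf_{0\leq s\leq t} e^{\frac{b}{2}s}u_s\big)^-$, and squaring gives the assertion.

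The step I expect to require the most care is organizing the reflection argument so that Skorokhod's lemma applies verbatim: because of the term $-\frac{b}{2}\int_0^t R_s\,ds$, the lemma cannot be applied directly to $R$, and the purpose of the substitution $\rho_t=e^{\frac{b}{2}t}R_t$ is precisely to reduce the reflected equation to the standard driftless Skorokhod problem for $e^{\frac{b}{2}t}u_t$ --- which is also why the quantity $e^{\frac{b}{2}s}u_s$ appears inside the infimum in the statement. Beyond that, one only has to check the bookkeeping items (the support property of $\widetilde{K}$ used above, and that pathwise uniqueness is indeed in force so that the functional identity obtained for this particular $W$ transfers to all Brownian motions); the remaining It\^{o} and Tanaka computations are routine and run parallel to the case $b=0$ treated above.
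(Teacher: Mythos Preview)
Your proposal is correct and follows essentially the same route as the paper: construct a weak solution as $X_t=v_t^2$ with $v$ an Ornstein--Uhlenbeck process driven by an auxiliary Brownian motion $B$, set $W=\int\sgn(v)\,dB$, then use Tanaka's formula together with Skorokhod's lemma to identify $|v_t|$ as the claimed functional of $u$, and finally invoke pathwise uniqueness via \cite[Cor.~V.3.23]{karatzas}. The only cosmetic difference is the order of operations---the paper applies the Tanaka--Meyer formula directly to the exponentially rescaled process $e^{\frac{b}{2}t}v_t$ (which is already a local martingale, so Skorokhod's lemma applies immediately), whereas you apply Tanaka to $v$ first and then remove the drift by the integrating factor; your version is in fact the argument sketched in the paper's subsequent Remark~\ref{rem:reflected-SDE} combined with that rescaling.
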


\begin{rem}
	Note that the solution to the linear SDE~\eqref{eq:linear-SDE-b} is called
	Ornstein-Uhlenbeck process and can be solved explicitly by
	\begin{align}\label{eq:linear-SDE-b-sol}
		u_t = e^{-\frac{b}{2}t} \left(
				\sqrt{x_0} + \int_0^t e^{\frac{b}{2}s}\,dW_s
			\right), \quad t\geq0,
	\end{align}
	see, e.g., \cite[Ex.~V.6.8)]{karatzas}.
\end{rem}

\begin{proof}[Proof of Proposition~\ref{prop:sol-b}]
	Analogous to the above derivation, we start
	with a Brownian motion $B$ and consider the SDE
	\begin{align*}
		du_t^B = -\frac{b}{2}\,u^B_t \,dt + dB_t,
			\quad u_0^B=\sqrt{x_0}, \quad t\geq0.
	\end{align*}
	Moreover, we assume that the
	Brownian motion appearing in \eqref{eq:bessel-1-b} has the particular form
	\begin{align*}
		W_t = \int_0^t \sgn(u_s^B)\,dB_s, \quad t\geq0.
	\end{align*}
	Then, It\^{o}'s formula shows
	\begin{align*}
		d\left( \left(u_t^B\right)^2 \right)
			&= \left( 1-b\left(u_t^B\right)^2 \right) dt + 2\,u_t^B \,dB_t \\
			&= \left( 1-b\left(u_t^B\right)^2 \right) dt + 2 \abs{u_t^B} dW_t.
	\end{align*}
	Hence the solution of SDE~\eqref{eq:bessel-1-b} is given by
	\begin{align}\label{eq:proof-prop-bessel-1-b}
		X_t=\left(u_t^B\right)^2, \quad t\geq0.
	\end{align}
	On the other hand, the Tanaka-Meyer formula \cite[Thm.\ III.7.1(v)]{karatzas}
	applied to
	\begin{align*}
		\tilde{u}_t^B=u_t^B e^{\frac{b}{2}t}
	\end{align*}
	combined with the explicit expression \eqref{eq:linear-SDE-b-sol} yields
	\begin{align*}
		\abs{\tilde{u}_t^B}
			&= \sqrt{x_0} + \int_0^t e^{\frac{b}{2}s}\sgn\left(\tilde{u}_s^B\right) dB_s
			+ 2\Lambda_t^{\tilde{u}_B}(0) \\
		&= \sqrt{x_0} + \int_0^t e^{\frac{b}{2}s}\,dW_s
			+ 2\Lambda_t^{\tilde{u}_B}(0),
	\end{align*}
	where $\Lambda^{\tilde{u}_B}(0)$ denotes the semimartingale local time of $\tilde{u}^B$
	at $0$. Thus, Skorokhod's lemma \cite[Lem.\ III.6.14]{karatzas} shows
	\begin{align*}
		\abs{\tilde{u}_t^B} = \sqrt{x_0} + \int_0^t e^{\frac{b}{2}s}\,dW_s
			+ \left(\inf_{0\leq s\leq t} \sqrt{x_0} + \int_0^s e^{\frac{b}{2}u}\,dW_u\right)^-,
	\end{align*}
	and consequently
	\begin{align*}
		\abs{u_t^B} = u_t + e^{-\frac{b}{2}t}
			\left( \inf_{0\leq s\leq t}\, e^{\frac{b}{2}s}\,u_s \right)^-.
	\end{align*}
	It remains to apply \cite[Cor.\ V.3.23]{karatzas}.
\end{proof}

\begin{rem}\label{rem:reflected-SDE}
	Consider the situation of the proof of Proposition~\ref{prop:sol-b}.
	The Tanaka-Meyer formula applied to $u_t^B$ yields
	\begin{align*}
		\abs{u_t^B}
			&= \sqrt{x_0} -\frac{b}{2} \int_0^t u^B_s\sgn\left(u_s^B\right) ds
				+ \int_0^t \sgn\left(u_s^B\right) dB_s + 2\Lambda_t^{u_B}(0) \\
			&= \sqrt{x_0} -\frac{b}{2} \int_0^t \abs{u_s^B} ds
				+ \int_0^t dW_s + 2\Lambda_t^{u_B}(0).
	\end{align*}
	Hence $Z_t=\abs{u_t^B}$ is the solution of the reflected SDE on the domain $D={]0,\infty[}$
	given by
	\begin{align}\label{eq:refelcted-SDE}
		dZ_t = - \frac{b}{2}\,Z_t \,dt + dW_t + dK_t,
			\quad Z_0=\sqrt{x_0}, \quad t\geq0,
	\end{align}
	where $K$ is a process of bounded variation with variation increasing only when $Z_t\in\partial D=\{0\}$.
	For details on reflected SDEs we refer to~\cite{tanaka}.
	In view of~\eqref{eq:proof-prop-bessel-1-b}, we can express the solution $X$ to the SDE~\eqref{eq:bessel-1-b} by
	\begin{align*}
		X_t=(Z_t)^2, \quad t\geq0.
	\end{align*}
\end{rem}

\section{Equidistant Methods for SDE~(\ref{eq:bessel-1})}\label{sec:equi}
As shown in the previous section, the SDE~\eqref{eq:bessel-1}
admits the explicit solution \eqref{eq:bessel-1-sol}. This immediately links
the problem of approximating SDE \eqref{eq:bessel-1} to global optimization
under the Wiener measure. We refer to \cite{ritter1990} for results on
global optimization under the Wiener measure.

In this section we show sharp (up to constants) upper and lower bounds for $e_p^\equi$
for all $p\geq1$. Recall that $e_p^\equi$ denotes the $n$-th minimal error corresponding
to the approximation of SDE~\eqref{eq:bessel-1} and algorithms that are based on
equidistant grids, see~\eqref{eq:minimal-error-equi}.
We show that the convergence rate of $e_p^\equi$ is equal to $1/2$. Moreover, we show that
the drift-implicit Euler scheme converges with the optimal rate $1/2$.

For the proofs we exploit results from \cite{pitman} on the asymptotic error distribution
of the infimum of a Brownian motion approximated by equidistant points on the unit interval.
For $n\in\N$ we define
\begin{align}\label{eq:dn}
	\delta_n=\sqrt{n}\cdot \left(
			\min_{0\leq k\leq n} W_{\frac{k}{n}} - \inf_{0\leq s\leq 1} W_s
		\right),
\end{align}
where $W$ denotes a standard Brownian motion.
The following result is due to \cite[Thm.~1 and Lem.~6]{pitman}.

\begin{thm}[\cite{pitman}]\label{thm:pitman}
	\ 
	\begin{enumerate}[(i)]
	\item The sequence $(\delta_n)_{n\in\N}$ converges in distribution.
	\item For all $p\in[1,\infty[$ the sequence $(\delta_n^p)_{n\in\N}$ is uniformly integrable. In particular,
		for all $p\in[1,\infty[$ we have
		\begin{align}\label{eq:min-upper}
			\left(
				\E{ \abs{ \min_{0\leq k\leq n} W_{\frac{k}{n}} - \inf_{0\leq s\leq 1} W_s }^p }
			\right)^{1/p}
				\preccurlyeq n^{-1/2}.
		\end{align}
	\end{enumerate}
\end{thm}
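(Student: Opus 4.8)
The statement concerns only Brownian motion and is due to \cite{pitman}; I sketch the strategy. Throughout write $m=\inf_{0\le s\le1}W_s$ and let $\tau\in(0,1)$ be the almost surely unique time with $W_\tau=m$; by the arcsine law (cf.~\cite{karatzas}), $\tau$ has the density $f(t)=(\pi\sqrt{t(1-t)})^{-1}$ on $(0,1)$. Since $\min_{0\le k\le n}W_{k/n}\ge m$, with equality precisely when $\tau$ is a grid point, the random variable $\delta_n$ is governed entirely by the shape of $W$ near $\tau$. Let $\sigma_n^-=\lfloor n\tau\rfloor/n\le\tau\le\lceil n\tau\rceil/n=\sigma_n^+$ be the two grid points straddling $\tau$ and put $V_n=n\tau-\lfloor n\tau\rfloor\in[0,1)$, so that $\tau-\sigma_n^-=V_n/n$ and $\sigma_n^+-\tau=(1-V_n)/n$. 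Because both differences below are non-negative,
\[
 0\le\delta_n\le\sqrt n\,\min\bigl(W_{\sigma_n^-}-m,\ W_{\sigma_n^+}-m\bigr).
\]

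For part~(i) the plan is to zoom in at $\tau$ via the parabolic rescaling ``space $\times\sqrt n$, time $\times n$''. By Williams' decomposition of Brownian motion at its minimum (see \cite{pitman} and \cite[Ch.~VII]{yor}), conditionally on $(\tau,m)$ the forward fragment $(W_{\tau+s}-m)_{0\le s\le1-\tau}$ and the reversed backward fragment $(W_{\tau-s}-m)_{0\le s\le\tau}$ are independent and each is of $\mathrm{BES}(3)$ type started at $0$. Applying the rescaling and using the scaling invariance of $\mathrm{BES}(3)$ — together with the fact that, as the rescaled horizon tends to $\infty$, the bridge/conditioned versions converge on compacts to a free $\mathrm{BES}(3)$ from $0$ — one obtains that the rescaled pair of fragments converges in distribution in $C([0,\infty))^2$ to a pair $(R^+,R^-)$ of independent $\mathrm{BES}(3)$ processes started at $0$. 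Simultaneously, $V_n=\{n\tau\}$ converges in distribution to $V$ uniform on $[0,1)$, since $\E{e^{2\pi i kV_n}}=\widehat f(-kn)\to0$ for every $k\in\Z\setminus\{0\}$ by the Riemann--Lebesgue lemma; conditioning on $(\tau,m)$ and using that the rescaled fragments do not depend on the value of $\tau$ upgrades this to joint convergence with $V$ independent of $(R^+,R^-)$. Since $\delta_n$ is, after rescaling, the infimum of the two fragments over the integer lattice shifted by $V_n$, a routine truncation (controlling the infimum over large indices via the transience of $\mathrm{BES}(3)$) together with the continuous mapping theorem yields
\[
 \delta_n\ \todist\ \min\Bigl(\inf_{j\ge0}R^+_{1-V+j},\ \inf_{j\ge0}R^-_{V+j}\Bigr),
\]
which proves the convergence in distribution; the explicit density of the limit is computed in \cite{pitman}.

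For part~(ii) I would again use the displayed domination. Taking the grid point $\sigma_n^\ast$ on the \emph{longer} side of $\tau$, so that the corresponding fragment has length $\ge1/2$, gives $\delta_n\le\sqrt n\,(W_{\sigma_n^\ast}-m)$, the value at a time $\le1/n$ of a $\mathrm{BES}(3)$-type path emanating from $0$; this is small. Informally: for a free $\mathrm{BES}(3)$ from $0$ the first passage time of a level $\ell$ scales like $\ell^2$, so $\{\delta_n>\lambda\}$ forces the excursion interval of $W$ below the level $m+\lambda n^{-1/2}$ around $\tau$ to have length $<1/n$; that interval has length distributed like $(\lambda^2/n)\,H$ with $H$ the sum of two independent $\mathrm{BES}(3)$ first-passage times of level $1$, so the event has probability at most $\prob(H<\lambda^{-2})$, and a small-ball estimate for $\mathrm{BES}(3)$ (the Euclidean norm of a $3$-dimensional Brownian motion) gives $\prob(H<\lambda^{-2})\preccurlyeq e^{-c\lambda^2}$ with $c>0$, uniformly in $n$ (up to the $\mathrm{BES}(3)$-bridge corrections, which are controlled in \cite{pitman}). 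Hence $\sup_{n\in\N}\prob(\delta_n>\lambda)\preccurlyeq e^{-c\lambda^2}$, so $(\delta_n^q)_{n\in\N}$ is uniformly integrable for every $q\in[1,\infty[$, and in particular so is $(\delta_n^p)_{n\in\N}$ for every $p\in[1,\infty[$. Finally, because $\min_{0\le k\le n}W_{k/n}-\inf_{0\le s\le1}W_s=\delta_n/\sqrt n\ge0$ by \eqref{eq:dn}, the bound~\eqref{eq:min-upper} is exactly the resulting moment estimate
\[
 \Bigl(\E{\abs{\,\min_{0\le k\le n}W_{k/n}-\inf_{0\le s\le1}W_s\,}^p}\Bigr)^{1/p}
 = n^{-1/2}\bigl(\E{\delta_n^p}\bigr)^{1/p}\preccurlyeq n^{-1/2}.
\]

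The main obstacle is the joint weak convergence in part~(i): one must show not merely that the parabolically rescaled fragments and the offset $V_n$ converge separately, but that they converge \emph{jointly} with $V$ independent of the limiting $\mathrm{BES}(3)$ pair, and then legitimately interchange the lattice infimum with the weak limit. Making Williams' path decomposition of Brownian motion on a finite interval precise, and controlling the endpoint ($\mathrm{BES}(3)$-bridge) corrections uniformly in $n$ in both parts, is the technical heart of the matter; all of this is carried out in \cite{pitman}, whose Theorem~1 and Lemma~6 are quoted here.
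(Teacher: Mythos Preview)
The paper does not contain a proof of this theorem at all: it is stated as a result ``due to \cite[Thm.~1 and Lem.~6]{pitman}'' and is used as a black box in the proofs of Theorem~\ref{thm:upper-bound} and Lemma~\ref{lem:2}. There is therefore no ``paper's own proof'' to compare your proposal against.

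What you have written is a plausible outline of the argument in the cited reference (Williams' decomposition at the minimum, parabolic rescaling to independent $\mathrm{BES}(3)$ processes, equidistribution of the offset $V_n$, and a uniform tail bound for $(\delta_n)_n$), and it correctly identifies the delicate point as the joint convergence of the rescaled fragments with the offset. But for the purposes of this paper no proof is expected or required: the authors simply invoke the result. If your intention was to reproduce the paper's treatment, the appropriate ``proof'' is a one-line citation; if your intention was to supply an independent proof sketch of the cited theorem, that is a separate exercise outside the scope of the paper.
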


\begin{rem}
	In \cite{pitman}, the limiting distribution of the sequence $(\delta_n)_{n\in\N}$
	is given explicitly by means of three-dimensional Bessel processes.
\end{rem}

Recall that the solution of~\eqref{eq:bessel-1} is given by $X_1=\left(Y_1\right)^2$ with
\begin{align}\label{eq:Y1}
	Y_1 = \left(W_1+\sqrt{x_0}\right)+\left(\inf_{0\leq s\leq 1} W_s+\sqrt{x_0}\right)^-.
\end{align}
Moreover, for $n\in\N$ we define the approximation $\Xeq_1$ of $X_1$ by
\begin{align*}
	\Xeq_1=\left(\Yeq_1\right)^2,
\end{align*}
where
\begin{align*}
	\Yeq_1 = \left( W_1+\sqrt{x_0} \right)
		+ \left( \min_{0\leq k\leq n} W_{\frac{k}{n}} + \sqrt{x_0} \right)^-
\end{align*}
serves as an approximation of $Y_1$. Here, the global infimum is simply replaced by
the discrete minimum over $n$ equidistant knots.

The following upper bound is a consequence of Theorem~\ref{thm:pitman}(ii).

\begin{thm}\label{thm:upper-bound}
	For every $1\leq p<\infty$ we have
	\begin{align*}
		e_p\left( \Xeq_1 \right) \preccurlyeq n^{-1/2}.
	\end{align*}
	In particular, the $n$-th minimal error satisfies
	\begin{align*}
		e_p^\equi(n) \preccurlyeq n^{-1/2}.
	\end{align*}
\end{thm}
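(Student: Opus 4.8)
The plan is to reduce the claim $e_p(\Xeq_1) \preccurlyeq n^{-1/2}$ to an estimate on the difference $Y_1 - \Yeq_1$, which is purely a statement about how well the discrete minimum of the Brownian path approximates its global infimum, and then invoke Theorem~\ref{thm:pitman}(ii). First I would write $X_1 - \Xeq_1 = (Y_1)^2 - (\Yeq_1)^2 = (Y_1 - \Yeq_1)(Y_1 + \Yeq_1)$ and use the inequality $\abs{ab} \le \tfrac12(a^2+b^2)$ or, more conveniently, the elementary bound $\abs{u^2 - v^2} \le \abs{u-v}\cdot(\abs{u}+\abs{v})$ together with Hölder's inequality: for the $L_p$-norm of the product we pass to $L_{2p}$-norms of the two factors. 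Concretely,
\begin{align*}
	e_p(\Xeq_1) = \norm{Y_1^2 - (\Yeq_1)^2}_{L_p}
		\le \norm{Y_1 - \Yeq_1}_{L_{2p}} \cdot \norm{\abs{Y_1} + \abs{\Yeq_1}}_{L_{2p}}.
\end{align*}

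Next I would control each factor separately. For the first factor, subtracting the defining expressions for $Y_1$ in~\eqref{eq:Y1} and $\Yeq_1$, the terms $W_1 + \sqrt{x_0}$ cancel, leaving
\begin{align*}
	Y_1 - \Yeq_1 = \left(\inf_{0\leq s\leq 1} W_s + \sqrt{x_0}\right)^- - \left(\min_{0\leq k\leq n} W_{\frac{k}{n}} + \sqrt{x_0}\right)^-.
\end{align*}
Since $x \mapsto x^-$ is $1$-Lipschitz, $\abs{Y_1 - \Yeq_1} \le \abs{\inf_{0\leq s\leq 1} W_s - \min_{0\leq k\leq n} W_{\frac kn}}$, and therefore Theorem~\ref{thm:pitman}(ii) (applied with exponent $2p$ in place of $p$) gives $\norm{Y_1 - \Yeq_1}_{L_{2p}} \preccurlyeq n^{-1/2}$. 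For the second factor I would note that $\abs{Y_1}$ and $\abs{\Yeq_1}$ are each bounded by $\abs{W_1} + \sqrt{x_0} + \abs{\inf_{0\le s\le 1} W_s}$ (respectively with the discrete minimum), and $\abs{\min_{0\le k\le n} W_{k/n}} \le \abs{\inf_{0\le s\le 1}W_s}$. Since $\sup_{0\le t\le 1}\abs{W_t}$ has finite moments of all orders (it is dominated by a reflected/absolute Brownian motion, e.g. by the reflection principle or the Burkholder–Davis–Gundy inequality), the quantity $\norm{\abs{Y_1}+\abs{\Yeq_1}}_{L_{2p}}$ is bounded by a constant independent of $n$.

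Combining the two bounds yields $e_p(\Xeq_1) \preccurlyeq n^{-1/2} \cdot C = O(n^{-1/2})$, which is the first assertion. The second assertion, $e_p^\equi(n) \preccurlyeq n^{-1/2}$, is then immediate: $\Yeq_1$ is a Borel-measurable function of $W_{1/n}, \dots, W_1$ (indeed of the increments, and $W_1$ is among the sampled points), so $\Xeq_1 \in \mathfrak{C}^\equi(n)$, and by the definition~\eqref{eq:minimal-error-equi} of the $n$-th minimal error we have $e_p^\equi(n) \le e_p(\Xeq_1) \preccurlyeq n^{-1/2}$. I expect no serious obstacle here; the only point requiring a little care is organizing the Hölder split so that the $L_{2p}$-moment bounds from Theorem~\ref{thm:pitman}(ii) apply cleanly, and recording explicitly that the second factor's moments are uniformly bounded in $n$ — both of which are routine once the Lipschitz reduction of the first factor is in place.
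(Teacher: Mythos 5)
Your proposal is correct and follows essentially the same route as the paper: both reduce the problem to the error of the discrete minimum of $W$ via the factorization $X_1-\Xeq_1=(Y_1-\Yeq_1)(Y_1+\Yeq_1)$, split the product with H\"older/Cauchy--Schwarz into $L_{2p}$-norms, bound $\abs{Y_1-\Yeq_1}$ by $\min_{0\leq k\leq n}W_{k/n}-\inf_{0\leq s\leq 1}W_s$ (your Lipschitz argument for $x\mapsto x^-$ versus the paper's direct computation), and invoke Theorem~\ref{thm:pitman}(ii) together with uniform moment bounds on $Y_1$ and $\Yeq_1$. No gaps.
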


\begin{proof}
	At first, note that $0 \leq \Yeq_1 \leq Y_1$ and
	\begin{align*}
		\abs{ Y_1 - \Yeq_1 }
			&= \left( \inf_{0\leq s\leq 1} W_{s} + \sqrt{x_0} \right)^-
				-\left( \min_{0\leq k\leq n} W_{\frac kn} + \sqrt{x_0} \right)^-
			\leq \min_{0\leq k\leq n} W_{\frac kn} - \inf_{0\leq s\leq 1} W_{s}.
	\end{align*}
	Hence we get
	\begin{align*}
		\abs{ X_1-\Xeq_1 } = \left( Y_1+\Yeq_1 \right) \cdot \abs{ Y_1-\Yeq_1 }
			\leq 2 Y_1 \cdot
				\left(
					\min_{0\leq k\leq n} W_{\frac kn}-\inf_{0\leq s\leq 1} W_{s}
				\right).
	\end{align*}
	Finally, Cauchy-Schwarz inequality yields
	\begin{align*}
		\E{\abs{ X_1-\Xeq_1 }^p}
			&\leq 2^p\cdot \left(
				\E{ Y_1^{2p} }
				\cdot \E{\abs{ \min_{0\leq k\leq n} W_{\frac kn}-\inf_{0\leq s\leq 1} W_{s} }^{2p}}
			\right)^{1/2} \\
		&\preccurlyeq n^{-p/2}
	\end{align*}
	due to \eqref{eq:min-upper} and
	\begin{align*}
		\E{ Y_1^r } < \infty, \quad 1\leq r<\infty,
	\end{align*}
	since $Y_1\eqdist \abs{W_1+\sqrt{x_0}}$, see Remark~\ref{rem:dist}.
\end{proof}

The natural extension of $\Xeq_1$ to an approximation on the whole equidistant grid with
mesh size $1/n$ is given by
\begin{align*}
	\Xeq_{\frac kn} = \left(\Yeq_{\frac kn}\right)^2, \quad k=0,\ldots,n,
\end{align*}
where
\begin{align*}
	\Yeq_{\frac kn} = \left( W_{\frac kn}+\sqrt{x_0} \right)
		+ \left( \min_{0\leq l\leq k} W_{\frac{l}{n}} + \sqrt{x_0} \right)^-,
		\quad k=0,\ldots,n.
\end{align*}
Let us stress that this scheme can be expressed by the following Euler-type scheme
\begin{align}\label{eq:euler-projected}
	\Yeq_{\frac{k+1}{n}} = \max\left(
			\Yeq_{\frac kn} + \left(W_{\frac{k+1}{n}}-W_{\frac kn}\right), 0
		\right), \quad k=0,\dots,n-1,
\end{align}
and $\Yeq_0=\sqrt{x_0}$.
In the context of reflected SDEs, scheme~\eqref{eq:euler-projected} is simply
the projected Euler scheme for a reflected Brownian motion. Due to
\begin{align*}
	\max(x,0) = \frac{x+\abs{x}}{2} = \frac{x+\sqrt{x^2}}{2}, \quad x\in\R,
\end{align*}
the drift-implicit Euler scheme~\eqref{eq:drift-implicit-Euler-scheme}
for $b=0$ reads
\begin{align}\label{eq:drift-implicit-Euler-scheme-2}
	\Ximp_\frac{k+1}{n} = \left( \max \left(
			\sqrt{ \Ximp_\frac{k}{n} } + \left(W_{\frac{k+1}{n}}-W_{\frac kn}\right), 0
		\right) \right)^2, \quad k=0,\ldots,n-1.
\end{align}
Thus it coincides with the squared version of \eqref{eq:euler-projected}, i.e.,
\begin{align*}
	\Ximp_\frac{k}{n} = \Xeq_\frac{k}{n}, \quad k=0,\ldots,n.
\end{align*}

\begin{rem}\label{rem:sup-norm}
	We briefly comment on results for the more general case
	of SDE~\eqref{eq:bessel-1-b} with arbitrary $b\in\R$.
	Moreover, we consider a stronger global error criterion
	where pathwise the global error is measured in the supremum norm.
	Up to a logarithmic factor we will obtain the same error bound
	as in Theorem~\ref{thm:upper-bound}.
	
	For $n\in\N$ we denote by $\Zbar=(\Zbar)_{0\leq t\leq 1}$ the projected Euler scheme
	with $n$ steps associated to the reflected SDE~\eqref{eq:refelcted-SDE}
	up to time $t=1$, see \cite{slominski}. More precisely, $\Zbar$ is defined by
	\begin{align*}
		\Zbar_{0} &= \sqrt{x_0},\\
		\Zbar_{\frac{k+1}{n}} &= \max \left(
				\Zbar_{\frac{k}{n}} - \frac{b}{2}\cdot\Zbar_{\frac{k}{n}}\cdot\frac{1}{n}
				+ \left(W_{\frac{k+1}{n}}-W_{\frac kn}\right), 0
			\right),
			\quad k=0,\dots,n-1,
	\end{align*}
	and piecewise constant interpolation, i.e.,
	\begin{align*}
	\Zbar_{t} = \Zbar_{\frac{k}{n}},
		\quad t\in{[k/n,(k+1)/n[}\,,
		\quad k=0,\dots,n-1.
	\end{align*}
	The solution $X$ to SDE~\eqref{eq:bessel-1-b} is then approximated by
	\begin{align*}
		\Xbar_t = \left( \Zbar_{t} \right)^2, \quad t\in[0,1].
	\end{align*}
	At the grid points, this scheme coincides with the drift-implicit Euler
	scheme~\eqref{eq:drift-implicit-Euler-scheme-2} if $b=0$.
	Similar to the proof of Theorem~\ref{thm:upper-bound}, we obtain
	\begin{align*}
		\left( \E{ \sup_{0\leq t\leq 1} \abs{X_t-\Xbar_t}^p } \right)^{1/p}
			\preccurlyeq n^{-1/2}\cdot \sqrt{\ln(1+n)}
	\end{align*}
	for all $p\in{[1,\infty[}$ due to \cite[Cor.~2.5, Cor.~2.6 and Thm.~3.2(i)]{slominski} and
	Remark~\ref{rem:reflected-SDE}.
	Let us stress that this error bound is the first strong
	convergence result with a positive rate in the case $\sigma^2/(2a)=2$,
	cf.~Figure~\ref{fig:CIR-rates}.
\end{rem}

We now turn to the question whether an algorithm can do better
than $\Xeq_1$ in an asymptotic sense if this algorithm has
the same information about the Brownian motion as $\Xeq_1$.

Recall the definition of the $n$-th minimal error $e_p^\equi$
given in~\eqref{eq:minimal-error-equi}. The proof of the following
theorem is postponed to Section~\ref{sec:proofs}.

\begin{thm}\label{thm:lower-bound}
	For all $p\in{[1,\infty[}$ we have
	\begin{align*}
		e^\equi_p(n) \succcurlyeq n^{-1/2}.
	\end{align*}
\end{thm}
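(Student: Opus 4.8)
The plan is to reduce the lower bound for $e_p^\equi(n)$ to a lower bound for the error of approximating $\inf_{0 \le s \le 1} W_s$ by a function of the equidistant observations $W_{1/n}, \dots, W_1$, and then to establish the latter via a two-point (or conditioning) argument exploiting the fluctuation of the Brownian path between grid points. Since $X_1 = (Y_1)^2$ with $Y_1$ as in~\eqref{eq:Y1}, and since the map $y \mapsto y^2$ is Lipschitz on the relevant bounded region, an approximation $\Xhat_1 \in \mathfrak{C}^\equi(n)$ induces an approximation of $Y_1$ of comparable quality; conversely, a lower bound on the error for $Y_1$ transfers to $X_1$ after dealing with the squaring. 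I would first make this reduction precise, observing that $Y_1$ is itself $\inf_{0 \le s \le 1}W_s$ composed with a Lipschitz map (the reflection map) plus the known quantity $W_1 + \sqrt{x_0}$, so up to constants it suffices to lower-bound
\begin{align*}
	\inf\left\{ \left(\E{\big| m - \widehat{m}\,\big|^p}\right)^{1/p} : \widehat{m} = \Psi(W_{1/n},\dots,W_1),\ \Psi \text{ Borel} \right\}
\end{align*}
where $m = \inf_{0 \le s \le 1} W_s$. Because the optimal $L_p$-approximation based on $\mathcal{G}_n := \sigma(W_{1/n},\dots,W_1)$ is dominated (up to a $p$-dependent constant, for $p \ge 1$) by the corresponding conditional-median/conditional-expectation error, it is enough to bound $\E{|m - \E{m \cond \mathcal{G}_n}|^p}$ from below, or even just $\E{(m - \E{m \cond \mathcal{G}_n})^2}$ when $p = 2$ and then use monotonicity of $L_p$-norms together with a truncation/uniform-integrability argument coming from Theorem~\ref{thm:pitman}(ii) to handle all $p \in [1,\infty[$.

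For the core estimate I would condition on $\mathcal{G}_n$: given the endpoint values $W_{k/n}$, the Brownian bridges on the $n$ subintervals are conditionally independent, and $m = \min_k \big( W_{(k-1)/n} \wedge W_{k/n} + \text{(bridge infimum on interval }k) \big)$. The conditional law of $m$ given $\mathcal{G}_n$ therefore has fluctuations of order $n^{-1/2}$: for instance, on the subinterval realizing the discrete minimum, the infimum of a Brownian bridge of length $1/n$ pinned at its (near-equal) endpoints has conditional standard deviation of order $n^{-1/2}$, and this randomness is not measurable with respect to $\mathcal{G}_n$. Concretely, I would isolate the event $A_n$ that the discrete minimum is attained at an \emph{interior} grid point $k^\ast$ with a definite gap to the second-smallest value (this event has probability bounded below uniformly in $n$, by Brownian scaling and a simple estimate), and on $A_n$ show that $\mathrm{Var}(m \cond \mathcal{G}_n) \succcurlyeq 1/n$, because the bridge infima on the two subintervals adjacent to $k^\ast$ contribute independent $O(n^{-1/2})$-scale randomness that cannot be cancelled. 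Then
\begin{align*}
	\E{(m - \E{m \cond \mathcal{G}_n})^2} \ge \E{\ind_{A_n}\, \mathrm{Var}(m \cond \mathcal{G}_n)} \succcurlyeq n^{-1},
\end{align*}
which gives $e_2^\equi(n) \succcurlyeq n^{-1/2}$; the passage to general $p$ then follows from $\|\cdot\|_{L_p} \ge \|\cdot\|_{L_1} \succcurlyeq \|\cdot\|_{L_2}^2 / \|\cdot\|_{L_2}$ after controlling the second moment of $m - \widehat m$ via Theorem~\ref{thm:pitman}(ii) and the triangle inequality (the approximant may be assumed to have $L_2$-norm $\preccurlyeq n^{-1/2}$, otherwise its error is already $\succcurlyeq n^{-1/2}$).

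The main obstacle I anticipate is the clean lower bound on the conditional variance of $m$ on the event $A_n$: one must verify that the infimum of a Brownian bridge on a short interval genuinely adds order $n^{-1/2}$ of $\mathcal{G}_n$-independent spread to $m$, \emph{and} that this is not destroyed by the minimum over the other $n-1$ intervals (which could, in principle, often produce a smaller value and thereby clamp $m$). Handling this requires a careful but standard localization: condition further on the second-smallest discrete value being larger than the smallest by at least $c\, n^{-1/2}$ for a small constant $c$, so that with positive probability the bridge on interval $k^\ast$ alone determines $m$ over a range of width $\asymp n^{-1/2}$, whence its conditional variance contribution survives. Quantifying the probability of this localization event uniformly in $n$ — equivalently, that the gap between the two smallest of the rescaled discrete values $\sqrt{n}\,W_{k/n}$ is bounded away from $0$ with probability bounded away from $0$ — is the delicate point, but it can be read off from the tightness and limiting-distribution statements of Theorem~\ref{thm:pitman}(i) (the rescaled discrete overshoot and the associated bridge excursions have a nondegenerate joint limit) together with Brownian scaling.
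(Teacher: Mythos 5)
Your core idea---that conditioning on the grid values leaves an irreducible $n^{-1/2}$-scale fluctuation coming from the Brownian bridge near the minimizing interval---is exactly the engine that drives the paper's proof. But the execution you sketch has two genuine gaps, and at a third point you worry about a difficulty that the paper sidesteps by a cleaner device.

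First, the reduction from $X_1$ to $\inf_{0\le s\le 1} W_s$ is not as automatic as you suggest. A lower bound for approximating $\inf W_s$ does \emph{not} transfer to $X_1$ without ensuring simultaneously that (a) reflection actually occurs, i.e.\ $\inf W_s < -\sqrt{x_0}$ (otherwise the negative-part term is identically $0$ and $X_1=(W_1+\sqrt{x_0})^2$ is observable exactly), and (b) the infimum is not attained near the endpoint $t=1$ (otherwise $Y_1$ may be near $0$ and the squaring kills the Lipschitz constant, compressing the uncertainty). The paper builds precisely these two requirements into the event of Lemma~\ref{lem:1}, namely $W_{i_n^\ast/n}\le -\sqrt{x_0}$ and $W_{i_n^\ast/n}\le W_1-\eps_0$, and proves the event has probability bounded away from $0$ uniformly in $n$ via Donsker plus Lemma~\ref{lem:inf}. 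You do not address this; without it the claimed equivalence ``up to constants'' fails for some $x_0$ or some realizations.

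Second, the passage from the conditional-variance ($L_2$) bound to general $p\ge 1$ is wrong as written: $\|\cdot\|_{L_1}\succcurlyeq\|\cdot\|_{L_2}^2/\|\cdot\|_{L_2}$ just says $\|\cdot\|_{L_1}\succcurlyeq\|\cdot\|_{L_2}$, which is false. What you presumably want is a reverse H\"older/Paley--Zygmund step $\|\cdot\|_{L_1}\geq \|\cdot\|_{L_2}^2/\|\cdot\|_{L_q}$ with some $q>2$, but that requires an \emph{upper} bound on the $L_q$-norm of $m-\widehat{m}$ for an \emph{arbitrary} Borel-measurable $\widehat{m}$, which you cannot assume (the self-improvement trick you invoke only lets you assume the $L_2$-error is $\preccurlyeq n^{-1/2}$, not the $L_q$-error). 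Theorem~\ref{thm:pitman}(ii) gives uniform integrability for the specific statistic $\delta_n$, not for an arbitrary approximant. The paper avoids the moment-interpolation route entirely: by Jensen it suffices to prove the bound for $p=1$, and the $p=1$ bound is obtained directly from a probability estimate $\prob\bigl(|X_1-\widehat X_1|\geq c_0 n^{-1/2}\bigr)\geq c_1$, which is stronger and cleaner than a variance argument.

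Third, the ``clamping'' concern you raise---that the minimum over the $n-1$ other bridges could truncate the bridge infimum on the minimizing interval and destroy the conditional variance---is a real obstruction for a variance-based argument, and your proposed fix (conditioning on a gap between the two smallest grid values) adds a nontrivial extra tightness estimate. The paper sidesteps this entirely: rather than trying to control $\inf_s W_s$ itself, it observes that $W_{i_n^\ast/n}-\inf_s W_s \geq W_{i_n^\ast/n}-W_{i_n^\ast/n+1/(2n)}$, and the midpoint $W_{i_n^\ast/n+1/(2n)}$ conditionally on $\A_n$ is an explicit Gaussian $\ndist{(y_{i^\ast}+y_{i^\ast+1})/2}{1/(4n)}$, so a one-sided Gaussian tail gives the required anti-concentration directly, with no need to worry about what happens on the other subintervals. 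This is combined with a two-case argument against the benchmark scheme $\Xeq_1$ (Lemma~\ref{lem:2}): if $\widehat X_1$ is often far from $\Xeq_1$, use that $\Xeq_1$ is close to $X_1$; if $\widehat X_1$ is usually close to $\Xeq_1$, use the bridge-midpoint fluctuation to separate $X_1$ from $\Xeq_1$. I would encourage you to adopt these two devices---the benchmark-scheme comparison and the one-sided midpoint bound---to close the gaps above.
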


Combining Theorem~\ref{thm:upper-bound} and Theorem~\ref{thm:lower-bound} yields
the following asymptotic behavior of the $n$-th minimal error.

\begin{cor}\label{cor:convergence-equi}
	For all $p\in{[1,\infty[}$ we have
	\begin{align*}
		e^\equi_p(n) \asymp n^{-1/2}.
	\end{align*}
	In particular, the drift-implicit Euler scheme~\eqref{eq:drift-implicit-Euler-scheme-2}
	is asymptotically optimal.
\end{cor}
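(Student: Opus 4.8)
The plan is simply to assemble the two bounds established above. Theorem~\ref{thm:upper-bound} shows that for every $p\in[1,\infty[$ the approximation $\Xeq_1\in\mathfrak{C}^\equi(n)$ satisfies $e_p(\Xeq_1)\preccurlyeq n^{-1/2}$, and since $e_p^\equi(n)=\inf\{e_p(\Xhat_1):\Xhat_1\in\mathfrak{C}^\equi(n)\}$ by definition~\eqref{eq:minimal-error-equi}, this immediately yields $e_p^\equi(n)\preccurlyeq n^{-1/2}$. On the other hand, Theorem~\ref{thm:lower-bound} gives $e_p^\equi(n)\succcurlyeq n^{-1/2}$. Recalling that $a_n\asymp b_n$ means precisely $a_n\preccurlyeq b_n$ and $b_n\preccurlyeq a_n$, the combination of these two estimates gives $e_p^\equi(n)\asymp n^{-1/2}$ for all $p\in[1,\infty[$.

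For the final assertion, recall from~\eqref{eq:drift-implicit-Euler-scheme-2} and the discussion following it that the drift-implicit Euler scheme coincides with $\Xeq_1$ at the final time point, i.e.\ $\Ximp_1=\Xeq_1\in\mathfrak{C}^\equi(n)$. On the one hand $e_p^\equi(n)\leq e_p(\Ximp_1)$ since $\Ximp_1$ belongs to the admissible class, and on the other hand Theorem~\ref{thm:upper-bound} gives $e_p(\Ximp_1)=e_p(\Xeq_1)\preccurlyeq n^{-1/2}$. Together with $n^{-1/2}\preccurlyeq e_p^\equi(n)$ this chain of inequalities shows $e_p(\Ximp_1)\asymp e_p^\equi(n)$, i.e.\ $\Ximp_1$ attains the $n$-th minimal error up to a multiplicative constant that does not depend on $n$; this is exactly the claimed asymptotic optimality. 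There is no real obstacle at this stage: all the substantive work has been carried out in Theorem~\ref{thm:upper-bound} (through Pitman's estimate, Theorem~\ref{thm:pitman}) and in Theorem~\ref{thm:lower-bound}, and the corollary is a purely formal consequence of the two.
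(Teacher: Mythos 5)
Your argument is exactly the paper's: combine the upper bound from Theorem~\ref{thm:upper-bound} with the lower bound from Theorem~\ref{thm:lower-bound}, and note $\Ximp_1=\Xeq_1$ to transfer the upper bound to the drift-implicit Euler scheme. This is correct and complete.
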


\begin{rem}
	In Corollary~\ref{cor:convergence-equi} we obtain the same rate
	as in~\cite{ritter1990} for global optimization.
	In \cite{ritter1990}, the author studies optimal approximation of
	the time point where a Brownian motion attains its maximum,
	and provides a detailed analysis of general non-adaptive algorithms
	that do not necessarily rely on equidistant grids.
\end{rem}

\begin{rem}
	If we allow for more information about the Brownian path than just
	point evaluations $W_\frac{1}{n},\ldots,W_1$, the situation may change completely.
	For instance, if we consider algorithms that have access to the final value $W_1$ and to the infimum
	$\inf_{0\leq s\leq 1} W_s$ of the Brownian path, the problem of strong approximation
	of $X_1$ becomes trivial.
	Let us stress that the joint distribution of $\left( \inf_{0\leq s\leq1} W_s, W_1 \right)$
	has an explicit representation by means of a Lebesgue density,
	see, e.g., \cite[p.~154]{handbook-bm}.
\end{rem}

\section{Adaptive Methods for SDE~(\ref{eq:bessel-1})}\label{sec:adaptive}
In this section we present an adaptive algorithm for the approximation
of the solution of SDE~\eqref{eq:bessel-1} based on sequential observations
$W_{t_1},W_{t_2},\ldots$ of the Brownian motion $W$. In contrast to
Section~\ref{sec:equi}, here the points $t_1,t_2,\ldots$ are chosen adaptively,
i.e., the $k$-th evaluation site $t_k$ is a measurable function of
$W_{t_1}, \ldots, W_{t_{k-1}}$.
From Proposition~\ref{prop:sol-b} it is clear that the actual task consists
of the approximation of the global infimum $\inf_{s\in[0,1]} W_s$.
For this we use the adaptive algorithm from \cite{chh2015},
see also~\cite{Calvin1997,Calvin2001}.
This algorithm approximates $\inf_{s\in[0,1]} W_s$ by
the discrete minimum $\min_{0\leq k\leq n}W_{t_k}$.
In the following we describe the adaptive choice of $t_1,t_2,\ldots,t_n$.

\bigskip
The first observation is non-adaptively chosen to be the endpoint,
i.e., $t_1=1$. Moreover, for notational convenience we define $t_0=0$.

Let $n\in\N$, and consider the $(n+1)$-th step of the algorithm
where $t_{n+1}$ will be chosen based on the previous observations
$W_{t_1}, \ldots, W_{t_{n}}$.
For this, we denote the ordered first $n$ evaluation sites by
\begin{align*}
	0 = t_0\n < t_1\n < \ldots < t_n\n = 1,
\end{align*}
such that $\{t_0,\ldots,t_n\}=\{t_0\n,\ldots,t_n\n\}$.
Moreover, we assume that we have made the following observations
\begin{align}\label{eq:w-cond}
	W_{t_0\n}=y_0\n=0,\quad 
	W_{t_1\n}=y_1\n,\quad\dots,\quad 
	W_{t_n\n}=y_n\n,
\end{align}
and we denote the corresponding discrete minimum by
\begin{align*}
	m\n = \min_{0\leq k\leq n} y_k\n.
\end{align*}
Conditioned on \eqref{eq:w-cond},
we have independent Brownian bridges from $y_{k-1}\n$ to $y_k\n$ on
the subinterval $[t_{k-1}\n,t_k\n]$, for $k\in\{1,\ldots,n\}$.
In the following we denote a Brownian bridge from $x$ to $y$ on $[0,T]$
with $x,y\in\R$ and $T>0$ by $B^{x,T,y}$.

The basic idea of the adaptive algorithm is a simple greedy strategy:
The next observation is taken at the midpoint of
the subinterval where the probability that the corresponding Brownian bridge
undershoots the current discrete minimum minus some threshold $\eps\n>0$ is maximal.
More precisely, we split the interval according to
\begin{align}\label{eq:adap-crit}
	k^* = \argmax_{1\leq k\leq n}\ \prob\left(
			\inf_{\ 0\leq s\leq T_k\n} B_s^{y_{k-1}\n,T_k\n,y_k\n}
			\leq m\n-\eps\n
		\right)
\end{align}
with $T_k\n=t_k\n-t_{k-1}\n$, and evaluate $W$ at
\begin{align*}
	t_{n+1} = \left( t_{k^*}\n + t_{k^*-1}\n \right)/2.
\end{align*}
We note that the infimum of a Brownian bridge
$(B_t^{x,T,y})_{t\in[0,T]}$ satisfies
\begin{align*}
	\prob \left( \inf_{0\leq s\leq T}B_s^{x,T,y} < z \right)
		&= \exp\left(-\frac{2}{T}\,(x-z)(y-z) \right)
\end{align*}
for $z\leq\min(x,y)$, see \cite[p.\ 67]{handbook-bm}.
Hence \eqref{eq:adap-crit} reduces to maximizing
\begin{align*}
	k^* = \argmax_{1\leq k\leq n}\ \frac{ T_k\n }{
			\left( y_{k-1}\n-m\n+\eps\n \right) \cdot
			\left( y_{k}\n-m\n+\eps\n \right)
		}.
\end{align*}
Finally, we have to specify the threshold $\eps\n$.
This threshold is chosen to be
\begin{align*}
	\eps\n = \sqrt{\lambda\, h\n\, \ln(1/h\n)},
\end{align*}
where $h\n = \min_{1\leq k\leq n} T_k\n$ denotes
the length of the smallest subinterval at step $n$ and where ${\lambda\in{[1,\infty[}}$
is some prespecified parameter. Let us stress that
all above (adaptive) quantities depend on the choice of the
parameter $\lambda$, although it is not explicitly indicated.

This amounts to a family of adaptive algorithms defined by
\begin{align}\label{eq:adap-alg}
	\Xhat\n_{\adap,\lambda} =
		\left(
			W_{t_1}+\sqrt{x_0} +
			\left( \min_{0\leq k\leq n}W_{t_k}+\sqrt{x_0} \right)^-
		\right)^2
\end{align}
for the approximation of~\eqref{eq:bessel-1-sol},
where the adaptively chosen points $t_1,\ldots,t_n$ depend
on the prespecified choice of $\lambda\in{[1,\infty[}$.

\begin{rem}
	A straightforward implementation of the algorithm~\eqref{eq:adap-alg} on a computer
	requires operations of order $n^2$.
\end{rem}

The following result is an immediate consequence of~\cite[Thm.~1]{chh2015}.

\begin{thm}\label{thm:adaptive}
	For all $p\in{[1,\infty[}$ and for all $q\in{[1,\infty[}$
	there exists $\lambda\in{[1,\infty[}$ such that
	\begin{align*}
		e_p\left( \Xhat\n_{\adap,\lambda} \right) \preccurlyeq n^{-q}.
	\end{align*}
\end{thm}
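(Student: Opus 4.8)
The plan is to reduce the assertion to the strong $L_{2p}$-approximation of the running infimum $\inf_{0\leq s\leq1}W_s$ by the discrete minimum $\min_{0\leq k\leq n}W_{t_k}$ over the adaptively chosen knots, and then to invoke \cite[Thm.~1]{chh2015}, which supplies precisely such a bound with an arbitrarily high polynomial rate provided $\lambda$ is chosen large enough.

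First I would observe that, since $t_1=1$, the approximation \eqref{eq:adap-alg} has the form $\Xhat\n_{\adap,\lambda}=\widehat{Y}^2$, where I write $\widehat{Y}$ for the non-negative square root
\[
	\widehat{Y}=\left(W_1+\sqrt{x_0}\right)+\left(\min_{0\leq k\leq n}W_{t_k}+\sqrt{x_0}\right)^-,
\]
which is structurally identical to $Y_1$ from \eqref{eq:Y1} with the true infimum replaced by the adaptive discrete minimum. Exactly as in the proof of Theorem~\ref{thm:upper-bound} --- using $\min_{0\leq k\leq n}W_{t_k}\geq\inf_{0\leq s\leq1}W_s$, the fact that $x\mapsto x^-$ is $1$-Lipschitz and non-increasing, and that $W_1$ is among the sampled values --- one obtains $0\leq\widehat{Y}\leq Y_1$ and
\[
	\abs{Y_1-\widehat{Y}}\leq\min_{0\leq k\leq n}W_{t_k}-\inf_{0\leq s\leq1}W_s.
\]
Hence $\abs{X_1-\Xhat\n_{\adap,\lambda}}=(Y_1+\widehat{Y})\,\abs{Y_1-\widehat{Y}}\leq2\,Y_1\cdot\big(\min_{0\leq k\leq n}W_{t_k}-\inf_{0\leq s\leq1}W_s\big)$, and the Cauchy--Schwarz inequality yields
\[
	e_p\!\left(\Xhat\n_{\adap,\lambda}\right)\leq2\left(\E{Y_1^{2p}}\right)^{1/(2p)}\left(\E{\Big\vert\min_{0\leq k\leq n}W_{t_k}-\inf_{0\leq s\leq1}W_s\Big\vert^{2p}}\right)^{1/(2p)},
\]
where $\E{Y_1^{2p}}<\infty$ because $Y_1\eqdist\abs{W_1+\sqrt{x_0}}$ by Remark~\ref{rem:dist}.

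It then remains to control the second factor. The knots $t_1,\dots,t_n$ produced by the greedy rule \eqref{eq:adap-crit} with midpoint refinement and threshold $\eps\n=\sqrt{\lambda\,h\n\,\ln(1/h\n)}$ are exactly the knots of the algorithm analysed in \cite{chh2015}, and \cite[Thm.~1]{chh2015} provides, for every exponent $q$ (and the fixed moment order $2p$), a parameter $\lambda=\lambda(q,p)\in{[1,\infty[}$ large enough that $\big(\E{\abs{\min_{0\leq k\leq n}W_{t_k}-\inf_{0\leq s\leq1}W_s}^{2p}}\big)^{1/(2p)}\preccurlyeq n^{-q}$; combined with the displayed estimate this gives $e_p(\Xhat\n_{\adap,\lambda})\preccurlyeq n^{-q}$. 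Since only the existence of one admissible $\lambda$ for each fixed pair $(p,q)$ is claimed, the $p$-dependence of $\lambda$ causes no difficulty. The genuinely substantive input is the cited theorem of \cite{chh2015}; the main work in the present argument is therefore merely to verify that the greedy criterion, the midpoint split, and the threshold $\eps\n$ match the set-up of \cite{chh2015} and that its conclusion is available in the $L_{2p}$-sense with a rate tending to infinity as $\lambda\to\infty$, after which the reduction above is elementary.
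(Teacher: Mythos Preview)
Your argument is correct and matches the paper's approach: the paper states the theorem as an immediate consequence of \cite[Thm.~1]{chh2015}, and the reduction you spell out---identical to the one in the proof of Theorem~\ref{thm:upper-bound}---is exactly the implicit step needed to pass from the infimum approximation result of \cite{chh2015} to the bound on $e_p(\Xhat\n_{\adap,\lambda})$. There is nothing to add.
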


\begin{rem}
	The analysis in \cite{chh2015} shows that
	\begin{align*}
		\lambda \geq 144\cdot (1+2pq)
	\end{align*}
	is sufficient to obtain a convergence order $q\in{[1,\infty[}$ for
	the $L_p$-norm in Theorem~\ref{thm:adaptive}.
	However, numerical experiments indicate an exponential decay
	even for small values of $\lambda$,
	see Figure~\ref{fig:numerical-results}.
\end{rem}

Recall the definition of the $n$-th minimal error $e_p^\adap$
given in~\eqref{eq:minimal-error-adap}.

\begin{cor}\label{cor:convergence-adap}
	For all $p\in{[1,\infty[}$ and for all $q\in{[1,\infty[}$ we have
	\begin{align*}
		e_p^\adap(n) \preccurlyeq n^{-q}.
	\end{align*}
\end{cor}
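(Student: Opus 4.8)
The plan is to read this off directly from Theorem~\ref{thm:adaptive} and the definition of the $n$-th minimal error in~\eqref{eq:minimal-error-adap}. Fix $p\in{[1,\infty[}$ and $q\in{[1,\infty[}$. Theorem~\ref{thm:adaptive} supplies a parameter $\lambda\in{[1,\infty[}$ and a constant $c>0$ with $e_p(\Xhat\n_{\adap,\lambda})\leq c\cdot n^{-q}$ for every $n\in\N$, so it suffices to show that for each fixed $n$ the approximation $\Xhat\n_{\adap,\lambda}$ from~\eqref{eq:adap-alg} lies in the class $\mathfrak{C}^\adap(n)$.

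To see this I would check that the adaptive construction in Section~\ref{sec:adaptive} is of exactly the form prescribed in the definition of $\mathfrak{C}^\adap(n)$: the first site $t_1=1$ is a deterministic point in $[0,1]$ (hence chosen non-adaptively), and for $k\in\{2,\ldots,n\}$ the site $t_k$ is computed from $W_{t_1},\ldots,W_{t_{k-1}}$ by ordering these evaluation points, forming the discrete minimum and the threshold, solving the $\argmax$ in~\eqref{eq:adap-crit}, and taking the midpoint of the selected subinterval --- each of these operations is Borel-measurable, so $t_k=\varphi_k(W_{t_1},\ldots,W_{t_{k-1}})$ for some Borel-measurable $\varphi_k\colon\R^{k-1}\to[0,1]$. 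Finally $\Xhat\n_{\adap,\lambda}$ is a Borel-measurable function of $(W_{t_1},\ldots,W_{t_n})$ by the explicit formula~\eqref{eq:adap-alg}; the value $W_{t_0}=W_0=0$ occurring there is deterministic and costs no evaluation. Hence $\Xhat\n_{\adap,\lambda}\in\mathfrak{C}^\adap(n)$.

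Taking the infimum over $\mathfrak{C}^\adap(n)$ then gives $e_p^\adap(n)\leq e_p(\Xhat\n_{\adap,\lambda})\leq c\cdot n^{-q}$ for all $n$, i.e., $e_p^\adap(n)\preccurlyeq n^{-q}$, as claimed (the constant depending on $p$ and $q$ through $\lambda$). There is no real obstacle here: all the analytic work is already contained in Theorem~\ref{thm:adaptive}, which in turn rests on \cite[Thm.~1]{chh2015}; the corollary only records that the adaptive algorithm constructed there is an admissible competitor in the class defining $e_p^\adap$.
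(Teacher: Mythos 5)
Your argument is correct and matches what the paper leaves implicit: the paper gives no proof for Corollary~\ref{cor:convergence-adap} because it regards it as an immediate consequence of Theorem~\ref{thm:adaptive} together with the observation that $\Xhat\n_{\adap,\lambda}\in\mathfrak{C}^\adap(n)$, which is exactly the content you spell out. The only small point worth making explicit is that the $\argmax$ in~\eqref{eq:adap-crit} need not be unique, so one should fix a measurable tie-breaking rule (e.g.\ take the smallest maximizing index) to make each $\varphi_k$ well defined; with that convention your measurability check goes through.
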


\begin{figure}[htp]
	\centering
	\includegraphics[width=0.9\linewidth]{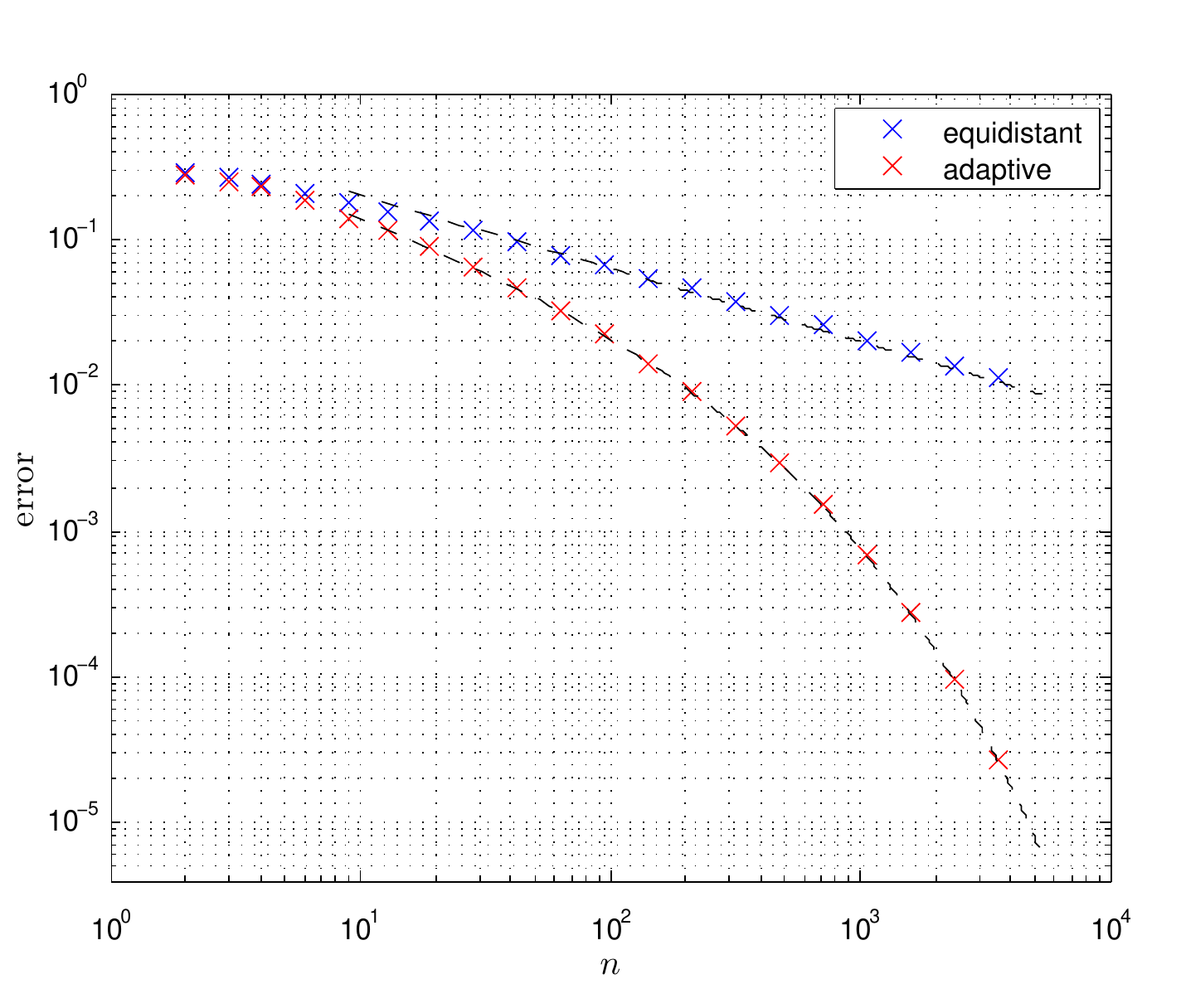}
	\caption{Numerical results for the drift-implicit Euler
		scheme~\eqref{eq:drift-implicit-Euler-scheme-2} (blue)
		and the adaptive algorithm~\eqref{eq:adap-alg} with $\lambda=4$ (red).
		The error given by $e_p(\cdot)$ with $p=2$ is estimated based on $10^4$ samples.
		The dashed lines show $0.63\cdot n^{-1/2}$ and $0.64\cdot n^{-0.54}\exp(-0.095\sqrt{n})$.
		}
	\label{fig:numerical-results}
\end{figure}

\section{Proofs}\label{sec:proofs}
In this section we provide the proof of Theorem~\ref{thm:lower-bound}, which
relies on Lemma~\ref{lem:1} and Lemma~\ref{lem:2}.

For $n\in\N$ we define $i_n^\ast\in\{0,\dots,n\}$ to be an index that satisfies
\begin{align*}
	W_{\frac{i_n^\ast}{n}} = \min_{0\leq i\leq n} W_{\frac{i}{n}}.
\end{align*}
Note that $i_n^\ast$ is (almost surely) uniquely defined.

\begin{lem}\label{lem:inf}
For $z\geq 0$ we have
	\begin{align*}
		\inf_{n\in\N} \prob \left(
				\abs{W_\frac{i_n^\ast}{n}-W_\frac{i_n^\ast+1}{n}} \leq \frac{1}{\sqrt{n}},\ 
				W_\frac{i_n^\ast}{n}\leq -z
			\right) > 0.
	\end{align*}
\end{lem}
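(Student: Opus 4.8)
The plan is to reduce the claim to a statement about the asymptotic behaviour of $\prob(A_n)$, where $A_n=\bigl\{\,|W_{i_n^\ast/n}-W_{(i_n^\ast+1)/n}|\le n^{-1/2},\ W_{i_n^\ast/n}\le -z\,\bigr\}$ denotes the event in question. For every fixed $n$ one has $\prob(A_n)>0$: e.g.\ the subevent on which $i_n^\ast=1$, $W_{1/n}\le -z$, $0<W_{2/n}-W_{1/n}\le n^{-1/2}$ and $W_{k/n}>W_{1/n}$ for $2\le k\le n$ has positive probability under the (nondegenerate Gaussian) law of $(W_{1/n},\dots,W_{(n+1)/n})$. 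Hence $\inf_{n}\prob(A_n)>0$ follows once we show $\liminf_{n\to\infty}\prob(A_n)>0$, the finitely many remaining terms being bounded below by their positive minimum. Fix a compact subinterval $[1/4,3/4]\subset(0,1)$ and discard realisations whose grid-argmin lies outside it:
\begin{align*}
	\prob(A_n)\ \ge\ \sum_{j:\;j/n\in[1/4,3/4]}\prob\bigl(A_n\cap\{i_n^\ast=j\}\bigr).
\end{align*}

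The crux is a conditional independence at the argmin. For $1\le j\le n-1$ write $\{i_n^\ast=j\}=\mathcal E_j^{\mathrm L}\cap\mathcal E_j^{\mathrm R}$ with $\mathcal E_j^{\mathrm L}=\{W_{k/n}>W_{j/n}:0\le k<j\}$ and $\mathcal E_j^{\mathrm R}=\{W_{k/n}>W_{j/n}:j<k\le n\}$. Now $\mathcal E_j^{\mathrm L}$ together with $\{W_{j/n}\le -z\}$ is a function of the increments of $W$ on $[0,j/n]$, while $\mathcal E_j^{\mathrm R}$ — since $W_{k/n}>W_{j/n}$ is the same as $W_{k/n}-W_{j/n}>0$, so that $W_{j/n}$ itself drops out — together with $\{W_{(j+1)/n}-W_{j/n}\le n^{-1/2}\}$ is a function of the increments of $W$ on $[j/n,1]$; moreover on $\{i_n^\ast=j\}$ one has $W_{(j+1)/n}\ge W_{j/n}$, so the absolute value in $A_n$ equals $W_{(j+1)/n}-W_{j/n}$. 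Thus, by independence of Brownian increments followed by conditioning (and using $\{i_n^\ast=j\}=\mathcal E_j^{\mathrm L}\cap\mathcal E_j^{\mathrm R}$),
\begin{align*}
	\prob\bigl(A_n\cap\{i_n^\ast=j\}\bigr)=\prob(i_n^\ast=j)\cdot\alpha_{n,j}\cdot\beta_{n,j},
\end{align*}
where $\alpha_{n,j}=\prob\bigl(W_{j/n}\le -z\mid\mathcal E_j^{\mathrm L}\bigr)$ and $\beta_{n,j}=\prob\bigl(\sqrt n\,(W_{(j+1)/n}-W_{j/n})\le 1\mid\mathcal E_j^{\mathrm R}\bigr)$, whence
\begin{align*}
	\prob(A_n)\ \ge\ \Bigl(\inf_{j:\;j/n\in[1/4,3/4]}\alpha_{n,j}\,\beta_{n,j}\Bigr)\cdot\prob\bigl(i_n^\ast/n\in[1/4,3/4]\bigr).
\end{align*}

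It remains to bound the three quantities below, uniformly for large $n$. For the last factor, $i_n^\ast/n$ converges almost surely to the a.s.\ unique point $\tau^\ast$ at which $W$ attains its minimum on $[0,1]$ (uniqueness of $\tau^\ast$ plus uniform continuity of paths), so by the portmanteau theorem $\liminf_n\prob(i_n^\ast/n\in[1/4,3/4])\ge\prob(\tau^\ast\in(1/4,3/4))>0$. For $\alpha_{n,j}$, reversing time on $[0,j/n]$ turns $(W_{(j-l)/n}-W_{j/n})_{l=0}^{j}$ into a centred Gaussian random walk $R$ of length $j$ with step variance $1/n$, under which $\mathcal E_j^{\mathrm L}=\{R_l>0:1\le l\le j\}$ and $-W_{j/n}=R_j$; since $j\ge n/4$ this gives $\alpha_{n,j}\ge\prob\bigl(\sqrt{n/j}\,R_j\ge 2z\mid R_1>0,\dots,R_j>0\bigr)$, and the Brownian-meander limit for random walks conditioned to stay positive yields $\sqrt{n/j}\,R_j\Rightarrow\mathbf m(1)$, so this lower bound tends to $\prob(\mathbf m(1)\ge 2z)=e^{-2z^2}>0$ as $j\to\infty$. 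Similarly $\beta_{n,j}=\prob\bigl(\sqrt n\,S_1\le 1\mid S_1>0,\dots,S_m>0\bigr)$, where $S$ is a centred Gaussian walk of length $m=n-j\ge n/4$ with step variance $1/n$; as $m\to\infty$ the (rescaled) first step of a walk conditioned to stay positive for $m$ steps converges in law to the first step of the walk conditioned to stay positive forever, whose law on $(0,\infty)$ charges $(0,1]$, so $\beta_{n,j}$ stays above a positive constant for $n$ large. Combining the three bounds gives $\liminf_n\prob(A_n)>0$.

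The step I expect to be the main obstacle is the uniform lower bound for $\beta_{n,j}$ (and, to a lesser degree, for $\alpha_{n,j}$): conditionally on a bulk grid point being the global grid-minimum, one must control the law of the increment of $W$ immediately to its right at the natural scale $n^{-1/2}$, uniformly in $n$ and in the location of the minimum. This is precisely the microscopic information around the minimum that is encoded in Theorem~\ref{thm:pitman}; indeed it can alternatively be extracted from the explicit description, via three-dimensional Bessel processes, of the limiting law of $\delta_n$ in \cite{pitman}.
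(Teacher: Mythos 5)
Your proposal is correct, and the pivot — conditioning on the location of the discrete argmin and using independence of increments to the left and right of it — is exactly the one the paper uses. Where you differ is in how the two conditional factors are bounded from below, and there the paper is considerably more elementary. For your $\beta_{n,j}$ (the increment to the right), which you rightly flag as the main obstacle, you invoke the conditioned-to-stay-positive limit theory and therefore must restrict to a bulk window $i_n^\ast/n\in[1/4,3/4]$ so that both $j\to\infty$ and $m=n-j\to\infty$; the paper instead proves the uniform comparison
\begin{align*}
  \prob\bigl(W_1\le 1\mid W_1\ge 0,\dots,W_m\ge 0\bigr)
  = \frac{\prob(0\le W_1\le 1,\ W_2\ge 0,\dots,W_m\ge 0)}{\prob(W_1\ge 0,\dots,W_m\ge 0)}
  \ \ge\ \prob(0\le W_1\le 1),
\end{align*}
valid for every $m\ge 0$: the numerator is bounded below by $\prob(0\le W_1\le1)\,\prob(W_2-W_1\ge0,\dots,W_m-W_1\ge0)$ (since $0\le W_1$ together with $W_k-W_1\ge0$ implies $W_k\ge0$, and $W_1$ is independent of the increments), while the denominator is bounded above by $\prob(W_1\ge0,\dots,W_{m-1}\ge0)=\prob(W_2-W_1\ge0,\dots,W_m-W_1\ge0)$, and the two products cancel. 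After Brownian scaling this gives a lower bound $\prob(0\le W_1\le1)$ on $\beta_{n,j}$ for every $j$ and every $n$, with no meander limit, no Bertoin--Doney convergence, and no finitely-many-small-$n$ patch. Your $\alpha_{n,j}$ is then dealt with even more cheaply in the paper: since the increment bound holds uniformly over the argmin location, one can simply resum over $i$ against $\prob(i_n^\ast=i)$ to reconstitute the unconditional probability $\prob(W_{i_n^\ast/n}\le -z)$, which is $\ge\prob(W_1\le-z)>0$ by the deterministic inequality $W_{i_n^\ast/n}\le W_1$ — so neither the time-reversed meander computation of $e^{-2z^2}$ nor the almost-sure convergence of $i_n^\ast/n$ to $\tau^\ast$ is needed. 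In sum: same decomposition, but the paper replaces your two asymptotic limit theorems by one one-line inequality plus a telescoping over $i$.
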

\begin{proof}
	Let $n\in\N$.
	Due to conditional independence we get
	\begin{align*}
		\prob &\left(
				\abs{W_\frac{i_n^\ast}{n}-W_\frac{i_n^\ast+1}{n}} \leq \frac{1}{\sqrt{n}},\ 
				W_\frac{i_n^\ast}{n}\leq -z
			\right) \\
		&= \sum_{i=0}^n \prob \left(
				\abs{W_\frac{i_n^\ast}{n}-W_\frac{i_n^\ast+1}{n}} \leq \frac{1}{\sqrt{n}},\ 
				W_\frac{i_n^\ast}{n}\leq -z
				\cond {i_n^\ast=i}
			\right)
			\cdot \prob\left( i_n^\ast=i \right) \\
		&= \sum_{i=0}^n \prob \left(
				\abs{W_\frac{i_n^\ast}{n}-W_\frac{i_n^\ast+1}{n}} \leq \frac{1}{\sqrt{n}}
				\cond {i_n^\ast=i}
			\right)
			\cdot \prob \left(
				W_\frac{i_n^\ast}{n}\leq -z
				\cond {i_n^\ast=i}
			\right)
			\cdot \prob\left( i_n^\ast=i \right).
	\end{align*}
	Moreover, straightforward calculations show
	\begin{align*}
		\prob \left(
				\abs{W_\frac{i_n^\ast}{n}-W_\frac{i_n^\ast+1}{n}} \leq \frac{1}{\sqrt{n}}
				\cond {i_n^\ast=i}
			\right)
			&= \prob \left(
				\abs{W_{\frac{1}{n}}} \leq \frac{1}{\sqrt{n}}
				\cond W_{\frac{1}{n}} \geq0,\ \ldots,\ W_{\frac{n-i}{n}} \geq0
			\right) \\
		&= \prob \left(
				\abs{W_{1}} \leq 1 \,\Big\vert\, W_{1} \geq0,\ \ldots,\ W_{n-i} \geq0
			\right) \\
		&\geq \prob(0\leq W_1\leq1)
	\end{align*}
	for $i\in\{0,\dots,n\}$. Thus we have
	\begin{align*}
		\prob &\left(
				\abs{W_\frac{i_n^\ast}{n}-W_\frac{i_n^\ast+1}{n}} \leq \frac{1}{\sqrt{n}},\ 
				W_\frac{i_n^\ast}{n}\leq -z
			\right) \\
		&\hspace{1cm}\geq \prob(0\leq W_1\leq1)
			\cdot \prob \left(
				W_\frac{i_n^\ast}{n}\leq -z
			\right)
			\geq \prob(0\leq W_1\leq1) \cdot \prob\left( W_1\leq -z\right) > 0,
	\end{align*}
	which completes the proof.
\end{proof}

\begin{lem}\label{lem:1}
	There exist $n_0\in\N$ and $0<\eps_0<1$ such that
	\begin{align*}
		\prob \left( \abs{ W_\frac{i_n^\ast}{n}-W_\frac{i_n^\ast+1}{n} } \leq \frac{1}{\sqrt{n}},\
			W_\frac{i_n^\ast}{n}\leq -\sqrt{x_0},\
			W_\frac{i_n^\ast}{n}\leq W_1-\eps_0\right)\geq \eps_0
	\end{align*}
	for all $n\geq n_0$.
\end{lem}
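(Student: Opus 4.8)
The plan is to bootstrap Lemma~\ref{lem:inf}. Set
\begin{align*}
	A_n := \set{\abs{W_\frac{i_n^\ast}{n}-W_\frac{i_n^\ast+1}{n}}\le\frac{1}{\sqrt{n}}}
		\cap\set{W_\frac{i_n^\ast}{n}\le-\sqrt{x_0}},
\end{align*}
so that Lemma~\ref{lem:inf} applied with $z=\sqrt{x_0}\ge0$ yields $c:=\inf_{n\in\N}\prob(A_n)>0$. Since $W_\frac{i_n^\ast}{n}=\min_{0\le i\le n}W_\frac{i}{n}$, the event whose probability is to be bounded in Lemma~\ref{lem:1} is precisely $A_n\cap B_n$ with $B_n:=\set{W_1-W_\frac{i_n^\ast}{n}\ge\eps_0}$. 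Hence $\prob(A_n\cap B_n)\ge\prob(A_n)-\prob(B_n^c)\ge c-\prob(B_n^c)$, and it suffices to find $\eps_0\in{]0,1[}$ and $n_0\in\N$ with $\prob(B_n^c)\le c/2$ for all $n\ge n_0$; insisting in addition that $\eps_0\le c/2$ then gives $\prob(A_n\cap B_n)\ge c/2\ge\eps_0$ for $n\ge n_0$, which is the assertion.

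It remains to bound $\prob(B_n^c)=\prob\big(W_1-\min_{0\le i\le n}W_\frac{i}{n}<\eps_0\big)$. By the definition of $\delta_n$ in~\eqref{eq:dn} we have $\min_{0\le i\le n}W_\frac{i}{n}=\inf_{0\le s\le1}W_s+\delta_n/\sqrt{n}$, so
\begin{align*}
	\set{W_1-\min_{0\le i\le n}W_\frac{i}{n}<\eps_0}
		\subseteq\set{W_1-\inf_{0\le s\le1}W_s<2\eps_0}\cup\set{\delta_n/\sqrt{n}\ge\eps_0}.
\end{align*}
For the second set, Markov's inequality together with Theorem~\ref{thm:pitman}(ii) --- that is, \eqref{eq:min-upper} with $p=1$, using $\delta_n/\sqrt{n}=\min_{0\le i\le n}W_\frac{i}{n}-\inf_{0\le s\le1}W_s\ge0$ --- gives $\prob(\delta_n/\sqrt{n}\ge\eps_0)\le\eps_0^{-1}\E{\delta_n/\sqrt{n}}\preccurlyeq\eps_0^{-1}n^{-1/2}$, which tends to $0$ as $n\to\infty$ for every fixed $\eps_0>0$. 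For the first set, time reversal of Brownian motion (the process $s\mapsto W_1-W_{1-s}$ is again a standard Brownian motion on $[0,1]$, with running supremum $W_1-\inf_{0\le s\le1}W_s$) together with the reflection principle shows $W_1-\inf_{0\le s\le1}W_s\eqdist\abs{W_1}$, so $\prob(W_1-\inf_{0\le s\le1}W_s<2\eps_0)=\prob(\abs{W_1}<2\eps_0)\to0$ as $\eps_0\to0$.

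To conclude, first choose $\eps_0\in{]0,1[}$ small enough that both $\prob(\abs{W_1}<2\eps_0)\le c/4$ and $\eps_0\le c/2$ hold; then, with $\eps_0$ now fixed, choose $n_0$ large enough that $\prob(\delta_n/\sqrt{n}\ge\eps_0)\le c/4$ for all $n\ge n_0$ (possible since this probability is $O(n^{-1/2})$ for fixed $\eps_0$). The two displays above and a union bound then give $\prob(B_n^c)\le c/2$ for all $n\ge n_0$, and the first paragraph finishes the proof. No step is a serious obstacle; the two points to watch are the pathwise identification of the target event as $A_n\cap B_n$, so that Lemma~\ref{lem:inf} feeds directly into a union bound, and the order of the quantifiers: $\eps_0$ must be fixed before $n_0$, since the tail bound controlling $\set{\delta_n/\sqrt{n}\ge\eps_0}$ degrades as $\eps_0\to0$.
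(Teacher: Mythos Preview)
Your proof is correct. The overall architecture matches the paper's: both identify the target event as $A_n\cap B_n$, invoke Lemma~\ref{lem:inf} to get $\inf_n\prob(A_n)>0$, and then show $\prob(B_n^c)$ is small for suitable $\eps_0$ and large $n$ via a union bound.

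The difference lies in how $\prob(B_n^c)=\prob\big(W_1-\min_{0\le i\le n}W_{i/n}<\eps_0\big)$ is controlled. The paper invokes Donsker's invariance principle together with a continuous-mapping argument to obtain the distributional convergence $W_{i_n^\ast/n}-W_1\todist \inf_{0\le s\le 1}W_s-W_1$, and then applies the portmanteau theorem. You instead split off the discretisation error $\delta_n/\sqrt{n}$ and bound it quantitatively via Markov's inequality and Theorem~\ref{thm:pitman}(ii), reducing the remaining piece to the explicit law $W_1-\inf_{0\le s\le 1}W_s\eqdist\abs{W_1}$. Your route is more elementary in that it avoids weak-convergence machinery (Donsker, portmanteau), at the cost of importing the uniform $L_1$-bound~\eqref{eq:min-upper}, which the paper's proof of Lemma~\ref{lem:1} does not use. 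Either way the order of quantifiers---fix $\eps_0$ first, then $n_0$---is handled correctly.
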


Let us mention that the events
\begin{align*}
	W_\frac{i_n^\ast}{n}\leq -\sqrt{x_0} \quad\wedge\quad W_\frac{i_n^\ast}{n}\leq W_1-\eps_0
\end{align*}
simply ensure that reflection occurs and that the discrete minimum (and thus the global infimum)
is not attained at the final time point $t=1$.

\begin{proof}[Proof of Lemma~\ref{lem:1}]
	Donsker's invariance principle and \cite[Thm.\ 2.7]{billingsley} yield
	\begin{align*}
		\left( W_\frac{i_n^\ast}{n} - W_1 \right)_n \todist \inf_{0\leq s\leq 1} W_s -W_1.
	\end{align*}
	Thus the portmanteau theorem implies
	\begin{align*}
		\liminf_{n\to\infty} \prob \left(
				W_\frac{i_n^\ast}{n} - W_1 \leq -\eps
			\right)
		&\geq \prob \left(
				\inf_{0\leq s\leq 1} W_s -W_1 < -\eps
			\right)
	\end{align*}
	for $\eps>0$.
	Moreover, since $\prob\left(\inf_{0\leq s\leq 1} W_s -W_1 <0\right)=1$,
	there exists $\eps_0>0$ such that
	\begin{align*}
		\liminf_{n\to\infty} \prob& \left(
				W_\frac{i_n^\ast}{n} - W_1 \leq -\eps_0
			\right)\\
			&\geq 1 - \frac{1}{2}\cdot \inf_{n\in\N} \prob \left(
				\abs{W_\frac{i_n^\ast}{n}-W_\frac{i_n^\ast+1}{n}} \leq \frac{1}{\sqrt{n}},\ 
				W_\frac{i_n^\ast}{n}\leq -\sqrt{x_0}
			\right)
	\end{align*}
	due to Lemma~\ref{lem:inf}. Hence we get
	\begin{align*}
		&\liminf_{n\to\infty} \prob \left(
				\abs{ W_\frac{i_n^\ast}{n}-W_\frac{i_n^\ast+1}{n} } \leq \frac{1}{\sqrt{n}},\ 
				W_\frac{i_n^\ast}{n}\leq -\sqrt{x_0},\ 
				W_\frac{i_n^\ast}{n}\leq W_1-\eps_0
			\right) \\
		&\geq \liminf_{n\to\infty} \left(
				\prob \left(
						\abs{ W_\frac{i_n^\ast}{n}-W_\frac{i_n^\ast+1}{n} } \leq \frac{1}{\sqrt{n}},\ 
						W_\frac{i_n^\ast}{n}\leq -\sqrt{x_0}
					\right) 
				+ \prob \left( W_\frac{i_n^\ast}{n}\leq W_1-\eps_0 \right)
				- 1
			\right) \\
		&\geq \inf_{n\in\N}
			\prob \left(
					\abs{ W_\frac{i_n^\ast}{n}-W_\frac{i_n^\ast+1}{n} } \leq \frac{1}{\sqrt{n}},\ 
					W_\frac{i_n^\ast}{n}\leq -\sqrt{x_0}
			\right) 
			+ \liminf_{n\to\infty} \prob \left( W_\frac{i_n^\ast}{n}\leq W_1-\eps_0 \right)
			- 1 \\
		&\geq \frac{1}{2}\cdot \inf_{n\in\N}
			\prob \left(
					\abs{ W_\frac{i_n^\ast}{n}-W_\frac{i_n^\ast+1}{n} } \leq \frac{1}{\sqrt{n}},\ 
					W_\frac{i_n^\ast}{n}\leq -\sqrt{x_0}
			\right) > 0
	\end{align*}
	due to Lemma~\ref{lem:inf}.
\end{proof}

\begin{lem}\label{lem:2}
	Let $0<\eps_0<1$ be according to Lemma~\ref{lem:1}. Then there exists a constant $c_0>0$ such that
	\begin{align*}
		\prob \left( \abs{ X_1-\Xeq_1 } \leq \frac{c_0}{2\sqrt{n}} \right)
			\geq 1-\frac{\varepsilon_0}{4}
	\end{align*}
	for all $n\in\N$.
\end{lem}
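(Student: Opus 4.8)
The plan is to combine the pathwise estimate already established in the proof of Theorem~\ref{thm:upper-bound} with the tightness of the rescaled error of the discrete minimum provided by Theorem~\ref{thm:pitman}(i). Recall from that proof that $0\leq\Yeq_1\leq Y_1$ and
\begin{align*}
	\abs{X_1-\Xeq_1}
		= \left(Y_1+\Yeq_1\right)\cdot\abs{Y_1-\Yeq_1}
		\leq 2\,Y_1\cdot D_n,
\end{align*}
where $D_n = \min_{0\leq k\leq n}W_{\frac kn}-\inf_{0\leq s\leq1}W_s\geq0$. Hence it suffices to find, for the prescribed $\eps_0$, constants $M>0$ and $K>0$ such that $\prob(\sqrt n\,D_n>M)\leq\eps_0/8$ for all $n\in\N$ and $\prob(Y_1>K)\leq\eps_0/8$: on the intersection of the complementary events one then has $\abs{X_1-\Xeq_1}\leq 2KM/\sqrt n$, and setting $c_0=4KM$ gives $\abs{X_1-\Xeq_1}\leq c_0/(2\sqrt n)$ with probability at least $1-\eps_0/4$.

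For the first constant I would use that $\delta_n=\sqrt n\,D_n$ converges in distribution by Theorem~\ref{thm:pitman}(i); a sequence that converges in distribution is tight, and adjoining the finitely many (a.s.\ finite) random variables $\delta_1,\dots,\delta_{N-1}$ keeps the whole family $\{\delta_n:n\in\N\}$ tight, so an $M$ with $\prob(\delta_n>M)\leq\eps_0/8$ for every $n$ exists. For the second constant I would invoke $Y_1\eqdist\abs{W_1+\sqrt{x_0}}$ from Remark~\ref{rem:dist}, which is a.s.\ finite, so $\prob(Y_1>K)\to0$ as $K\to\infty$ and a suitable $K$ exists; crucially this choice, and therefore $c_0=4KM$, is independent of $n$.

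Putting these two facts together with the displayed inequality yields the claim. The only point requiring a little care — the main, though minor, obstacle — is that the bound must hold for \emph{every} $n\in\N$ rather than merely asymptotically; this is exactly what the passage from convergence in distribution of $(\delta_n)_n$ to tightness of the full family achieves, together with the observation that the constant $c_0$ obtained in this way is genuinely $n$-independent.
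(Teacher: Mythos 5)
Your proof is correct and follows essentially the same route as the paper's: both start from the pathwise bound $\abs{X_1-\Xeq_1}\le 2Y_1\bigl(\min_k W_{k/n}-\inf_s W_s\bigr)$ from Theorem~\ref{thm:upper-bound}, use Theorem~\ref{thm:pitman}(i) (the paper via the portmanteau theorem, you via the equivalent tightness argument) to get a uniform-in-$n$ bound on $\sqrt n\,D_n$, and use the a.s.\ finiteness of $Y_1$ to bound it by a constant, before combining via a union bound. The only cosmetic difference is that the paper folds both controls into a single constant $c$ (taking $c_0=4c^2$), whereas you use two constants $M$ and $K$ with $c_0=4KM$.
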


\begin{proof}
	According to Theorem~\ref{thm:pitman}(i) and the portmanteau theorem, we have
	\begin{align*}
		\liminf_{n\to\infty} \prob \left( \delta_n\leq c \right)
						\geq \prob(\delta < c)
			\end{align*}
	for all $c\in\R$, where $\delta$ denotes the limit of $(\delta_n)_{n\in\N}$ given by~\eqref{eq:dn}.
	In particular, there exists $c>0$ such that
	\begin{align*}
		\prob\left(
				\min_{0\leq k\leq n} W_{\frac kn}-\inf_{0\leq s\leq 1} W_{s} \leq \frac{c}{\sqrt{n}},\ 
				{Y_1}\leq c
			\right)
			\geq 1 - \frac{\eps_0}{4}
	\end{align*}
	for all $n\in\N$ and $Y_1$ given by~\eqref{eq:Y1}. Finally, noting that
	\begin{align*}
		\abs{ X_1-\Xeq_1 }
			\leq 2 Y_1 \cdot
				\left(
					\min_{0\leq k\leq n} W_{\frac kn}-\inf_{0\leq s\leq 1} W_{s}
				\right),
	\end{align*}
	cf.~Theorem~\ref{thm:upper-bound}, implies
	\begin{align*}
		\set{ \min_{0\leq k\leq n} W_{\frac kn}-\inf_{0\leq s\leq 1} W_{s} \leq \frac{c}{\sqrt{n}},\
			 {Y_1}\leq c }
			\subseteq
			\set{ \abs{ X_1-\Xeq_1 } \leq \frac{c_0}{2\sqrt{n}} }
	\end{align*}
	with $c_0=4c^2$.
\end{proof}

\begin{proof}[Proof of Theorem~\ref{thm:lower-bound}]
	At first, note that Jensen's inequality implies
	\begin{align*}
		e^\equi_1(n) \leq e^\equi_p(n)
	\end{align*}
	for all $n\in\N$ and $p\in{[1,\infty[}$. Thus it suffices to consider $p=1$, i.e.,
	we will show the existence of a constant $c_1>0$ such that
	\begin{align*}
		\E{ \abs{ X_1-\Xhat_1 } } \geq c_1\cdot n^{-1/2}
	\end{align*}
	for all $n\in\N$ and for all random variables $\Xhat_1$ that are measurable
	w.r.t.~the $\sigma$-algebra $\A_n$ generated by $W_{\frac 1n},W_{\frac 2n},\ldots,W_1$.
	
	Let $n_0\in\N$, $\eps_0>0$, and $c_0>0$ be according to Lemma~\ref{lem:1}
	and Lemma~\ref{lem:2}, respectively. Without loss of generality,
	we may assume that $n\geq n_0$.
	In the following, we consider two cases separately.
	
	\bigskip
	At first, suppose that
	\begin{align*}
		\prob \left( \abs{ \Xhat_1 - \Xeq_1 } > \frac{c_0}{\sqrt{n}} \right)
			\geq \frac{\eps_0}{2}.
	\end{align*}
	By using reverse triangle inequality
	\begin{align*}
		\abs{ X_1-\Xhat_1 } \geq \abs{ \Xeq_1-\Xhat_1 } - \abs{ X_1-\Xeq_1 },
	\end{align*}
	we get
	\begin{align*}
		\left\{ \abs{ \Xeq_1-\Xhat_1 } > \frac{c_0}{\sqrt{n}} \right\}
			\cap \left\{ \abs{ X_1-\Xeq_1 } \leq \frac{c_0}{2\sqrt{n}} \right\}
			\subseteq \left\{ \abs{ X_1-\Xhat_1 } > \frac{c_0}{2\sqrt{n}} \right\},
	\end{align*}
	and thus
	\begin{align*}
		\prob \left( \abs{ X_1-\Xhat_1 } > \frac{c_0}{2\sqrt{n}} \right)
			&\geq \prob \left(
				\abs{ \Xeq_1-\Xhat_1 } > \frac{c_0}{\sqrt{n}},\
				\abs{ X_1-\Xeq_1 } \leq \frac{c_0}{2\sqrt{n}}
			\right) \\
		&\geq \frac{\eps_0}{2} + \left( 1-\frac{\eps_0}{4} \right) -1
	\end{align*}
	due to Lemma~\ref{lem:2}. This yields
	\begin{align}\label{eq:case-1}
		\E{ \abs{ X_1-\Xhat_1 } }
			\geq \frac{c_0}{2\sqrt{n}}\cdot \prob \left( \abs{ X_1-\Xhat_1 } > \frac{c_0}{2\sqrt{n}} \right)
			\geq \frac{c_0\,\eps_0}{8}\cdot\frac{1}{\sqrt{n}}.
	\end{align}
	
	\bigskip
	Now suppose that
	\begin{align*}
		\prob \left( \abs{ \Xhat_1 - \Xeq_1 } \leq \frac{c_0}{\sqrt{n}} \right)
			> 1 - \frac{\eps_0}{2},
	\end{align*}
	and define
	\begin{align*}
		A_n = \left\{ \left|W_\frac{i_n^\ast}{n}-W_\frac{i_n^\ast+1}{n}\right|\leq \frac{1}{\sqrt{n}},\
			W_\frac{i_n^\ast}{n}\leq -\sqrt{x_0},\
			W_\frac{i_n^\ast}{n}\leq W_1-\eps_0,\
			\abs{ \Xhat_1-\Xeq_1 } \leq \frac{c_0}{\sqrt{n}} \right\}.
	\end{align*}
	Let us stress that $A_n\in\A_n$ and
	\begin{align*}
		\prob(A_n) > \eps_0 + \left(1-\frac{\eps_0}{2}\right) - 1 = \frac{\eps_0}{2}
	\end{align*}
	due to Lemma~\ref{lem:1}. Moreover, we observe that reverse triangle inequality
	\begin{align*}
		\abs{ X_1-\Xhat_1 } \geq \abs{ X_1-\Xeq_1 } - \abs{ \Xeq_1-\Xhat_1 }
	\end{align*}
	yields
	\begin{align*}
		\left\{ \abs{ X_1-\Xeq_1 } \geq \frac{2c_0}{\sqrt{n}} \right\}
			\cap \left\{ \abs{ \Xhat_1-\Xeq_1 } \leq \frac{c_0}{\sqrt{n}} \right\}
			\subseteq \left\{ \abs{ X_1-\Xhat_1 } \geq \frac{c_0}{\sqrt{n}} \right\}.
	\end{align*}
	Combining this with
	\begin{align*}
		\abs{ X_1-\Xeq_1 }
			&= \left( W_\frac{i_n^\ast}{n} - \inf_{0\leq s\leq1} W_s \right)
			\cdot \left( W_1 - W_\frac{i_n^\ast}{n} + W_1 - \inf_{0\leq s\leq1} W_s \right) \\
		&\geq \left( W_\frac{i_n^\ast}{n} - \inf_{0\leq s\leq1} W_s \right)
			\cdot 2\eps_0
	\end{align*}
	on $A_n$, we obtain
	\begin{align*}
		A_n\cap
			\left\{  W_\frac{i_n^\ast}{n} - \inf_{0\leq s\leq1} W_s \geq \frac{c_0}{\eps_0\sqrt{n}} \right\}
			\subseteq \left\{ \abs{ X_1-\Xhat_1 } \geq \frac{c_0}{\sqrt{n}} \right\}. 
	\end{align*}
	Furthermore, we have
	\begin{align*}
		\inf_{0\leq s\leq1} W_s \leq W_{ \frac{i_n^\ast}{n}+\frac{1}{2n} }
	\end{align*}
	on $A_n$, since the discrete minimum is not attained at $t=1$, and thus
	\begin{align*}
		A_n\cap
			\left\{  W_\frac{i_n^\ast}{n} - W_{ \frac{i_n^\ast}{n}+\frac{1}{2n} } \geq \frac{c_0}{\eps_0\sqrt{n}} \right\}
			\subseteq \left\{ \abs{ X_1-\Xhat_1 } \geq \frac{c_0}{\sqrt{n}} \right\}.
	\end{align*}
	This yields
	\begin{align*}
		\prob\left(
				\abs{ X_1-\Xhat_1 } \geq \frac{c_0}{\sqrt{n}}
			\right)
			&\geq \prob\left(
				A_n\cap \left\{  W_\frac{i_n^\ast}{n} - W_{ \frac{i_n^\ast}{n}+\frac{1}{2n} } \geq \frac{c_0}{\eps_0\sqrt{n}} \right\}
			\right) \\
		&= \E{ \ind_{A_n}\cdot \prob\left( {   W_\frac{i_n^\ast}{n} - W_{ \frac{i_n^\ast}{n}+\frac{1}{2n} }
			\geq \frac{c_0}{\eps_0\sqrt{n}}  } \cond \A_n \right) }
	\end{align*}
	due to $A_n\in\A_n$. Conditioned on $W_\frac{i}{n}=y_i$ and $W_\frac{i+1}{n}=y_{i+1}$
	for $y_i,y_{i+1}\in\R$, we have
	\begin{align*}
		W_{ \frac{i}{n}+\frac{1}{2n} } \sim \ndist{(y_i+y_{i+1})/2}{1/(4n)}
	\end{align*}
	according to the Brownian bridge construction of $W$. Hence we get
	\begin{align*}
		\ind_{A_n}\cdot  \prob\left( {   W_\frac{i_n^\ast}{n} - W_{ \frac{i_n^\ast}{n}+\frac{1}{2n} }
			\geq \frac{c_0}{\eps_0\sqrt{n}}  } \cond \A_n \right)
			= \ind_{A_n}\cdot f\left( W_\frac{i_n^\ast+1}{n} -W_\frac{i_n^\ast}{n}\right),
	\end{align*}
	where $f:\R\to\R$ is given by
	\begin{align*}
		f(x) = \prob\left(\frac{Z}{\sqrt{4n}}\geq \frac{c_0}{\eps_0\sqrt{n}}+\frac{x}{2}\right)
	\end{align*}
	with $Z\sim\ndist{0}{1}$. Finally, using
	\begin{align*}
		\ind_{A_n}\cdot f\left( W_\frac{i_n^\ast+1}{n}-W_\frac{i_n^\ast}{n} \right)
			\geq \ind_{A_n}\cdot f\left(\frac{1}{\sqrt{n}}\right)
			= \ind_{A_n}\cdot \prob\left( Z\geq \frac{2c_0}{\eps_0}+1 \right),
	\end{align*}
	we obtain
	\begin{align*}
		\prob \left(
				\abs{ X_1-\Xhat_1 } \geq \frac{c_0}{\sqrt{n}}
			\right)
			\geq \prob(A_n)\cdot \prob\left( Z\geq \frac{2c_0}{\eps_0}+1 \right)
			\geq \frac{\varepsilon_0}{2}\cdot \prob\left( Z\geq \frac{2c_0}{\eps_0}+1 \right)
	\end{align*}
	and hence
	\begin{align}\label{eq:case-2}
		\begin{aligned}[c]
			\E{ \abs{ X_1-\Xhat_1 } }
				\geq \frac{c_0}{\sqrt{n}} \cdot \prob\left(\abs{ X_1-\Xhat_1 } \geq \frac{c_0}{\sqrt{n}}\right)
				\geq \frac{c_0\,\eps_0}{2\sqrt{n}} \cdot \prob\left( Z\geq \frac{2c_0}{\eps_0}+1 \right).
		\end{aligned}
	\end{align}
	Combining \eqref{eq:case-1} and \eqref{eq:case-2} completes the proof.
\end{proof}

\section*{Acknowledgement}
We thank James M.~Calvin, Martin Hutzenthaler, and Klaus Ritter
for valuable discussions and comments.

\bibliographystyle{plainnat}
\renewcommand*{\bibname}{References}
\bibliography{references}

\begin{thebibliography}{29}
\providecommand{\natexlab}[1]{#1}
\providecommand{\url}[1]{\texttt{#1}}
\expandafter\ifx\csname urlstyle\endcsname\relax
  \providecommand{\doi}[1]{doi: #1}\else
  \providecommand{\doi}{doi: \begingroup \urlstyle{rm}\Url}\fi

\bibitem[Alfonsi(2005)]{alfonsi2005}
Aur\'{e}lien Alfonsi.
\newblock On the discretization schemes for the {CIR} (and {B}essel squared)
  processes.
\newblock \emph{Monte Carlo Methods and Applications}, 11, 2005.

\bibitem[Alfonsi(2013)]{alfonsi2013}
Aur\'{e}lien Alfonsi.
\newblock Strong order one convergence of a drift implicit {E}uler scheme:
  {A}pplication to the {CIR} process.
\newblock \emph{Statistics and Probability Letters}, 83\penalty0 (2):\penalty0
  602--607, 2013.

\bibitem[{Asmussen} et~al.(1995){Asmussen}, {Glynn}, and {Pitman}]{pitman}
S{\o}ren {Asmussen}, Peter {Glynn}, and Jim {Pitman}.
\newblock {D}iscretization error in simulation of one-dimensional reflecting
  {B}rownian motion.
\newblock \emph{Ann. Appl. Probab.}, 5\penalty0 (4):\penalty0 875--896, 1995.

\bibitem[Berkaoui et~al.(2008)Berkaoui, Bossy, and Diop]{berkaoui}
Abdel Berkaoui, Mireille Bossy, and Awa Diop.
\newblock {E}uler scheme for {SDE}s with non-{L}ipschitz diffusion coefficient:
  strong convergence.
\newblock \emph{ESAIM: Probability and Statistics}, 12:\penalty0 1--11, 1 2008.

\bibitem[Billingsley(1999)]{billingsley}
Patrick Billingsley.
\newblock \emph{Convergence of {P}robability {M}easures}.
\newblock Wiley Series in Probability and Statistics: Probability and
  Statistics. John Wiley \& Sons, Inc., New York, second edition, 1999.

\bibitem[Borodin and Salminen(2002)]{handbook-bm}
Andrei~N. Borodin and Paavo Salminen.
\newblock \emph{{H}andbook of {B}rownian {M}otion -- {F}acts and {F}ormulae}.
\newblock Probability and its Applications. Birkh\"auser Verlag, Basel, second
  edition, 2002.

\bibitem[Calvin(1997)]{Calvin1997}
James~M. Calvin.
\newblock Average performance of a class of adaptive algorithms for global
  optimization.
\newblock \emph{Ann. Appl. Probab.}, 7\penalty0 (3):\penalty0 711--730, 1997.

\bibitem[Calvin(2001)]{Calvin2001}
James~M. Calvin.
\newblock A one-dimensional optimization algorithm and its convergence rate
  under the {W}iener measure.
\newblock \emph{J. Complexity}, 17\penalty0 (2):\penalty0 306--344, 2001.

\bibitem[Calvin et~al.(2016)Calvin, Hefter, and Herzwurm]{chh2015}
James~M. Calvin, Mario Hefter, and Andr\'e Herzwurm.
\newblock Adaptive approximation of the minimum of {B}rownian motion.
\newblock \emph{ArXiv e-prints}, 1601.01276, 2016.

\bibitem[Clark and Cameron(1980)]{clark-cameron}
J.~M.~C. Clark and R.~J. Cameron.
\newblock The maximum rate of convergence of discrete approximations for
  stochastic differential equations.
\newblock In \emph{Stochastic differential systems ({P}roc. {IFIP}-{WG} 7/1
  {W}orking {C}onf., {V}ilnius, 1978)}, volume~25 of \emph{Lecture Notes in
  Control and Information Sci.}, pages 162--171. Springer, Berlin-New York,
  1980.

\bibitem[Cox et~al.(1985)Cox, Ingersoll~Jr., and Ross]{cir}
John~C. Cox, Jonathan~E. Ingersoll~Jr., and Stephen~A. Ross.
\newblock A theory of the term structure of interest rates.
\newblock \emph{Econometrica}, 53\penalty0 (2):\penalty0 385--407, 1985.

\bibitem[Creutzig et~al.(2009)Creutzig, Dereich, M\"uller-Gronbach, and
  Ritter]{creutzig}
Jakob Creutzig, Steffen Dereich, Thomas M\"uller-Gronbach, and Klaus Ritter.
\newblock Infinite-dimensional quadrature and approximation of distributions.
\newblock \emph{Foundations of Computational Mathematics}, 9\penalty0
  (4):\penalty0 391--429, 2009.

\bibitem[Dereich et~al.(2012)Dereich, Neuenkirch, and Szpruch]{dereich}
Steffen Dereich, Andreas Neuenkirch, and Lukasz Szpruch.
\newblock An {E}uler-type method for the strong approximation of the
  {C}ox-{I}ngersoll-{R}oss process.
\newblock \emph{Proc. R. Soc. Lond. Ser. A Math. Phys. Eng. Sci.}, 468\penalty0
  (2140):\penalty0 1105--1115, 2012.

\bibitem[Giles(2008)]{giles1}
Michael~B. Giles.
\newblock Multilevel {M}onte {C}arlo path simulation.
\newblock \emph{Operations Research}, 56\penalty0 (3):\penalty0 607--617, 2008.

\bibitem[Giles(2015)]{giles2}
Michael~B. Giles.
\newblock Multilevel {M}onte {C}arlo methods.
\newblock \emph{Acta Numer.}, 24:\penalty0 259--328, 2015.

\bibitem[Hairer et~al.(2015)Hairer, Hutzenthaler, and Jentzen]{Hairer2015}
Martin Hairer, Martin Hutzenthaler, and Arnulf Jentzen.
\newblock Loss of regularity for {K}olmogorov equations.
\newblock \emph{Ann. Probab.}, 43\penalty0 (2):\penalty0 468--527, 2015.

\bibitem[Heinrich(1998)]{heinrich1}
Stefan Heinrich.
\newblock {M}onte {C}arlo complexity of global solution of integral equations.
\newblock \emph{J. Complexity}, 14\penalty0 (2):\penalty0 151--175, 1998.

\bibitem[Heston(1993)]{heston}
Steven~L. Heston.
\newblock A closed-form solution for options with stochastic volatility with
  applications to bond and currency options.
\newblock \emph{Review of Financial Studies}, 6\penalty0 (2):\penalty0
  327--343, 1993.

\bibitem[Hutzenthaler et~al.(2014)Hutzenthaler, Jentzen, and Noll]{hjn-cir}
Martin Hutzenthaler, Arnulf Jentzen, and Marco Noll.
\newblock Strong convergence rates and temporal regularity for
  {C}ox-{I}ngersoll-{R}oss processes and {B}essel processes with accessible
  boundaries.
\newblock \emph{ArXiv e-prints}, mar 2014.

\bibitem[Jentzen et~al.(2015)Jentzen, M\"uller-Gronbach, and
  Yaroslavtseva]{yaroslavtseva}
Arnulf Jentzen, Thomas M\"uller-Gronbach, and Larisa Yaroslavtseva.
\newblock On stochastic differential equations with arbitrary slow convergence
  rates for strong approximation.
\newblock \emph{ArXiv e-prints}, jun 2015.

\bibitem[Karatzas and Shreve(1988)]{karatzas}
Ioannis Karatzas and Steven~E. Shreve.
\newblock \emph{{B}rownian {M}otion and {S}tochastic {C}alculus}.
\newblock Springer-Verlag, 1988.

\bibitem[Kloeden and Platen(2010)]{kloeden-platen}
Peter~E. Kloeden and Eckhard Platen.
\newblock \emph{{N}umerical {S}olution of {S}tochastic {D}ifferential
  {E}quations}.
\newblock Applications of Mathematics 23. Berlin: Springer, 2010.

\bibitem[M\"uller-Gronbach(2004)]{gronbach2}
Thomas M\"uller-Gronbach.
\newblock {O}ptimal pointwise approximation of {SDE}s based on {B}rownian
  motion at discrete points.
\newblock \emph{Ann. Appl. Probab.}, 14\penalty0 (4):\penalty0 1605--1642,
  2004.

\bibitem[M{\"u}ller-Gronbach and Ritter(2008)]{gronbach-ritter}
Thomas M{\"u}ller-Gronbach and Klaus Ritter.
\newblock Minimal errors for strong and weak approximation of stochastic
  differential equations.
\newblock In \emph{Monte {C}arlo and quasi-{M}onte {C}arlo methods 2006}, pages
  53--82. Springer, Berlin, 2008.

\bibitem[Neuenkirch and Szpruch(2014)]{neuenkirch-szpruch}
Andreas Neuenkirch and Lukasz Szpruch.
\newblock First order strong approximations of scalar {SDE}s defined in a
  domain.
\newblock \emph{Numerische Mathematik}, pages 1--34, 2014.

\bibitem[Revuz and Yor(1999)]{yor}
Daniel Revuz and Marc Yor.
\newblock \emph{{C}ontinuous {M}artingales and {B}rownian {M}otion}, volume 293
  of \emph{Grundlehren der Mathematischen Wissenschaften}.
\newblock Springer-Verlag, Berlin, third edition, 1999.

\bibitem[Ritter(1990)]{ritter1990}
Klaus Ritter.
\newblock {A}pproximation and optimization on the {W}iener space.
\newblock \emph{J. Complexity}, 6\penalty0 (4):\penalty0 337--364, 1990.

\bibitem[S{\l}omi{\'n}ski(2001)]{slominski}
Leszek S{\l}omi{\'n}ski.
\newblock {E}uler's approximations of solutions of {SDE}s with reflecting
  boundary.
\newblock \emph{Stochastic Process. Appl.}, 94\penalty0 (2):\penalty0 317--337,
  2001.

\bibitem[Tanaka(1979)]{tanaka}
Hiroshi Tanaka.
\newblock Stochastic differential equations with reflecting boundary condition
  in convex regions.
\newblock \emph{Hiroshima Math. J.}, 9\penalty0 (1):\penalty0 163--177, 1979.

\end{thebibliography}

\end{document}